\documentclass[11pt,reqno,a4paper]{amsart}

\usepackage{amsmath,amssymb,amsthm,mathtools,mathrsfs,microtype,charter}
\usepackage{cite}
\usepackage[margin=.8in]{geometry}
\usepackage{enumitem,xcolor}
\usepackage[colorlinks=true,linkcolor=blue,citecolor=blue,urlcolor=blue]{hyperref}
\usepackage[british]{babel}
\usepackage[T1]{fontenc}
\usepackage[utf8]{inputenc}

\definecolor{RevisionBlue}{RGB}{0,0,0}
\DeclareRobustCommand{\rev}[1]{{\color{RevisionBlue}#1}}
\newenvironment{revision}{\begingroup\color{RevisionBlue}}{\endgroup}

\newtheorem{assumption}{Assumption}[section]
\newtheorem{theorem}[assumption]{Theorem}
\newtheorem{lemma}[assumption]{Lemma}
\newtheorem{proposition}[assumption]{Proposition}

\theoremstyle{definition}
\newtheorem{definition}[assumption]{Definition}
\newtheorem{remark}[assumption]{Remark}

\newcommand{\R}{\mathbb{R}}
\newcommand{\N}{\mathbb{N}}

\newcommand{\dt}{\partial_t}

\newcommand{\ds}{\,\mathrm{d}s}
\newcommand{\dx}{\,\mathrm{d}x}

\newcommand{\TM}{\mathcal T_M}
\newcommand{\ip}[2]{\left(#1,#2\right)}
\newcommand{\dual}[2]{\left\langle #1,#2\right\rangle}
\newcommand{\norm}[1]{\left\lVert #1\right\rVert}
\newcommand{\abs}[1]{\left\lvert #1\right\rvert}
\DeclareMathOperator{\diver}{div}

\begin{document}

\title[Rothe time discretization and weak solutions for a cutoff Westervelt system]{\Large Rothe time discretization and weak solutions \\[.1cm] for a cutoff Westervelt system}

\author{Marvin Fritz}

\address{%
Faculty of Mathematics, University of Vienna, 1090 Vienna, Austria
}

\maketitle
\vspace{-1cm}

\begin{abstract}
We study a fully implicit Rothe time discretization for a cutoff first-order formulation of the Westervelt equation. The key ingredients are the enthalpy variable and the primitive mobility variable, which \rev{turn each nonlinear time step into a uniformly monotone elliptic problem} and avoid higher-order energy estimates and inverse inequalities. For every time step, the discrete problem reduces to a monotone elliptic equation, which yields well-posedness of the Rothe scheme by the Browder--Minty theorem. We derive a discrete energy inequality, establish compactness for the transformed variable by an Alt--Luckhaus type argument, and pass to the limit to obtain existence of weak solutions for the cutoff first-order system. We prove a weak--strong uniqueness principle for the cutoff problem and formulate a conditional
a posteriori criterion under which the cutoff is inactive.  For sufficiently regular solutions of the forced cutoff problem, we establish consistency estimates which identify the temporal residuals produced by the Rothe approximation in the natural discrete-in-time weak norms. These estimates provide a rigorous consistency basis for the observed temporal behavior. Finally, numerical experiments illustrate the stability of the scheme, its observed near first-order temporal behavior, and \rev{inactivity of the cutoff along the computed discrete trajectories in the tested regimes}.%
\end{abstract} \vspace{.5cm}

The Westervelt equation is a classical model for nonlinear acoustic wave propagation and plays an
important role in applications such as high-intensity focused ultrasound, medical imaging and
therapy, lithotripsy, nondestructive testing, and underwater acoustics; see, for instance,
\cite{HamiltonBlackstock2024,SapozhnikovEtAl2019,Rudenko2022}. From the numerical point of view,
the equation is challenging because the coefficient in front of the highest time derivative depends
on the solution itself and may approach zero. This feature complicates both the well-posedness
analysis and the design of robust time discretizations. Existing approaches therefore often rely on
higher-order energy estimates, fixed-point arguments, and smallness assumptions that provide
$L^\infty$-control of the solution; see, for example,
\cite{KaltenbacherLasiecka2009,KaltenbacherNikolic2022,EggerFritz2025}. In parallel, a substantial
numerical literature has been developed for the Westervelt equation, including finite element,
mixed, discontinuous Galerkin, operator-splitting, and space--time methods; see
\cite{WalshTorres2007,KaltenbacherNikolicThalhammer2015,NikolicWohlmuth2019,AntoniettiEtAl2020,MelianiNikolic2024,GomezMeliani2025,GomezNikolic2025}.
Closely related nonlinear acoustic models from the same quasilinear family include, for instance,
the Kuznetsov \cite{kaltenbacher2011well,kaltenbacher2012analysis,dekkers2019cauchy}, Blackstock \cite{fritz2018well}, and Rasmussen-type equations \cite{EggerFritz2025}. 

The purpose of the present paper is to develop a time-discrete approach that is tailored to the
structure of the nonlinearity. Instead of working directly with the classical second-order equation,
we consider a first-order formulation in velocity and enthalpy variables and combine it with a
cutoff in the mobility coefficient. \begin{revision}
At each Rothe step, the cutoff formulation produces a uniformly monotone scalar elliptic problem and
naturally singles out the primitive mobility variable as the compact quantity in the limit process.
The scalar problem provides a robust nonlinear solve at each time level and yields a natural discrete
energy inequality.
\end{revision} In this way, the scheme can be analyzed in a low-order energy setting
without invoking the higher-order stability machinery that is standard in the classical quasilinear
wave framework.

The main contributions of this paper are as follows. First, we introduce a fully implicit Rothe
discretization for a cutoff first-order Westervelt system and prove well-posedness of each time
step by monotonicity methods. Second, we derive discrete energy estimates and compactness
properties for the Rothe approximations and pass to the limit to obtain existence of weak solutions
for the cutoff problem. Third, we prove a weak--strong uniqueness principle based on a relative
entropy for the transformed variable. Fourth, we formulate a conditional a posteriori criterion for
cutoff inactivity, which identifies a sufficient higher-order bound under which the uncut first-order
Westervelt system is recovered. 
The compactness argument is guided by the transformed variable $\nu(h)$ and is reminiscent of the
Alt--Luckhaus framework for nonlinear parabolic equations with time derivative of the form
$\partial_t b(u)$ \cite{AltLuckhaus1983}. Finally, we derive consistency estimates for sufficiently regular solutions of the forced cutoff
problem. These estimates show that the exact solution satisfies the Rothe equations up to residual
sequences of order $O(\tau)$ in the natural discrete $L^2$-in-time weak norms. They justify the interpretation of the observed rates as consistency-driven, but we do
not claim a full stability-based first-order error estimate in the energy norm. Numerical
experiments illustrate the stability of the scheme, its observed temporal behavior, and \rev{inactivity of the cutoff along the computed discrete trajectories}.

The paper is organized as follows. Section~\ref{sec:modeling} explains the passage from the classical
Westervelt equation to the cutoff first-order formulation and clarifies the reconstruction of the
scalar equation. Section~\ref{sec:cutoff} introduces the analytical formulation and the weak solution
concept. Section~\ref{sec:rothe} presents the Rothe scheme, derives
the discrete stability bounds, and then passes to the limit,
where the existence of weak solutions is proved first for $H_0^1$ initial data and then for general
$L^2$ initial data\rev{.} \rev{The general-$L^2$ result is obtained by approximation of the initial data at the level of continuous weak solutions.} Section~\ref{sec:weak-strong-uniqueness} proves a weak--strong uniqueness
principle for the cutoff system. Section~\ref{sec:cutoff-removal} discusses a posteriori removal of
the cutoff. Section~\ref{sec:error-estimate} establishes consistency estimates for the forced cutoff problem
and clarifies the stability obstruction that prevents a direct first-order energy-norm estimate
from the present argument. Section~\ref{sec:numerics} reports numerical experiments illustrating
the theory.

\section{From the Westervelt equation to the cutoff first-order formulation}\label{sec:modeling}

We briefly explain the passage from the classical Westervelt equation to the first-order cutoff
system studied in this paper.

Let $\psi$ denote the acoustic velocity potential. In a simplified nondimensional form, the
Westervelt equation can be written as
\begin{equation}\label{eq:westervelt-classical}
(1-a\psi_t)\psi_{tt} - b\Delta \psi_t - \Delta \psi = 0
\qquad\text{in }(0,T)\times\Omega,
\end{equation}
supplemented with homogeneous Dirichlet boundary conditions for $\psi_t$ and with suitable initial
data. The constants $a>0$ and $b>0$ describe the nonlinear and diffusive effects, respectively.

The coefficient $1-a\psi_t$ in front of the highest time derivative is the main structural
difficulty of the equation. In particular, the equation loses its hyperbolic--parabolic character
if this factor approaches zero. To rewrite the problem in first-order form, we introduce the
enthalpy variable
\[
h:=\psi_t
\]
and the velocity variable
\[
v:=-\nabla\psi.
\]
Differentiating the relation $v=-\nabla\psi$ with respect to time gives
\[
\partial_t v + \nabla h = 0.
\]
Moreover, substituting $h=\psi_t$ and $v=-\nabla\psi$ into \eqref{eq:westervelt-classical} yields
\[
(1-ah)\partial_t h + \diver v = b\Delta h.
\]
Hence the Westervelt equation is formally equivalent to the first-order system
\begin{equation}\label{eq:west-first-order}
\begin{aligned}
\partial_t v + \nabla h &= 0,
\\
(1-ah)\partial_t h + \diver v &= b\Delta h.
\end{aligned}
\end{equation}

For the analysis below, it is convenient to rewrite the second equation in conservative form.
To this end, we introduce the mobility
\[
m(r):=1-ar.
\]
Formally, \eqref{eq:west-first-order} then becomes
\[
\partial_t \nu(h) + \diver v = b\Delta h,
\qquad
\nu(r):=\int_0^r m(s)\,\mathrm ds
      = r-\frac{a}{2}r^2.
\]
However, the mobility $m(r)$ is not uniformly positive on all of $\R$, and therefore the map
$\nu$ is not globally monotone.

To avoid this degeneracy, we introduce a cutoff level $M\in(0,1/a)$ and define
\[
\TM(r):=\max\{-M,\min\{r,M\}\},
\qquad
m(r):=1-a\TM(r).
\]
Then there exist constants $\delta\in(0,1)$ and $\Lambda\ge 1$ such that
\[
\delta \le m(r)\le \Lambda
\qquad\text{for all }r\in\R.
\]
Explicitly, $\delta=1-aM$ and $\Lambda=1+aM$.
With the corresponding primitive
\begin{equation}\label{eq:def-nu-modeling}
\nu(r):=\int_0^r m(s)\,\mathrm ds,
\end{equation}
the cutoff problem takes the form
\begin{equation}\label{eq:cutoff-first-order-modeling}
\begin{aligned}
\partial_t v + \nabla h &= 0,
\\
\partial_t \nu(h) + \diver v &= b\Delta h.
\end{aligned}
\end{equation}

The advantage of \eqref{eq:cutoff-first-order-modeling} is that the transformed variable
$\nu(h)$ inherits the uniform monotonicity of the cutoff mobility. This yields a scalar monotone
elliptic problem at each Rothe step and provides the compact quantity for the passage to the limit.
Once a solution of the cutoff problem has been constructed, the uncut first-order Westervelt system
is recovered a posteriori if one can show that $|h|\le M$ almost everywhere, so that the cutoff is
inactive. Recovering the scalar Westervelt equation additionally requires the compatibility
$v_0=-\nabla\psi_0$ and the reconstruction of a velocity potential; see
Proposition~\ref{prop:potential-reconstruction}.

\begin{proposition}[Reconstruction of a velocity potential]\label{prop:potential-reconstruction}
Let $(v,h)$ satisfy
\[
v\in L^\infty(0,T;L^2(\Omega)^d)\cap H^1(0,T;L^2(\Omega)^d),
\]
\[
h\in L^\infty(0,T;L^2(\Omega))\cap L^2(0,T;H_0^1(\Omega)),
\qquad
\nu(h)\in H^1(0,T;H^{-1}(\Omega)),
\]
and suppose that $(v,h)$ solves the \emph{uncut} first-order Westervelt system
\begin{equation}\label{eq:uncut-first-order}
\begin{aligned}
\partial_t v + \nabla h &= 0
&&\text{in }L^2(0,T;L^2(\Omega)^d),\\
\partial_t \nu(h) + \diver v &= b\Delta h
&&\text{in }L^2(0,T;H^{-1}(\Omega)),
\end{aligned}
\end{equation}
where $\nu(r)=r-\frac a2 r^2$. Assume in addition that
\[
v(0)=-\nabla \psi_0
\qquad\text{for some }\psi_0\in H_0^1(\Omega).
\]
Define
\[
\psi(t):=\psi_0+\int_0^t h(s)\,\ds
\qquad\text{for }t\in[0,T].
\]
Then
\[
\psi\in W^{1,\infty}(0,T;L^2(\Omega))\cap W^{1,1}(0,T;H_0^1(\Omega)),
\qquad
\psi_t=h
\quad\text{a.e. in }(0,T)\times\Omega,
\]
and
\[
v=-\nabla\psi
\qquad\text{in }L^\infty(0,T;L^2(\Omega)^d).
\]
Consequently,
\[
\partial_t \nu(\psi_t)-\Delta\psi=b\Delta\psi_t
\qquad\text{in }L^2(0,T;H^{-1}(\Omega)).
\]
Since $\nu(r)=r-\frac a2 r^2$, this is the scalar Westervelt equation in conservative form.
\begin{revision}
If, in addition,
\[
\psi_t\in L^\infty((0,T)\times\Omega),
\qquad
\psi_{tt}\in L^2(0,T;L^2(\Omega)),
\]
then the Bochner chain rule gives
\[
\partial_t\!\left(\psi_t-\frac a2\psi_t^2\right)
=(1-a\psi_t)\psi_{tt}
\qquad\text{in }L^2(0,T;L^2(\Omega)),
\]
and the conservative identity reduces to
\[
(1-a\psi_t)\psi_{tt}-b\Delta\psi_t-\Delta\psi=0
\qquad\text{in }L^2(0,T;H^{-1}(\Omega)).
\]
\end{revision}
\end{proposition}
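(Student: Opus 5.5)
The plan is to obtain all claims from Bochner-integral calculus in $H_0^1$ and $L^2$, together with the weak identity $\partial_t v=-\nabla h$.

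\emph{Regularity of $\psi$.} Since $h\in L^2(0,T;H_0^1(\Omega))\subset L^1(0,T;H_0^1(\Omega))$ and $\psi_0\in H_0^1(\Omega)$, the map $\psi(t)=\psi_0+\int_0^t h\,\ds$ is absolutely continuous with values in $H_0^1(\Omega)$. Its weak derivative is $h$, so $\psi\in W^{1,1}(0,T;H_0^1(\Omega))$; in fact it lies in $H^1(0,T;H_0^1(\Omega))$. Similarly, $h\in L^\infty(0,T;L^2(\Omega))$ gives $\psi\in W^{1,\infty}(0,T;L^2(\Omega))$ with $\psi_t=h$. The time derivatives taken in $L^2$ and in $H_0^1$ coincide, because the embedding $H_0^1\hookrightarrow L^2$ is continuous and injective. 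The identity $\psi_t=h$ a.e.\ in $(0,T)\times\Omega$ then follows by identifying the Bochner derivative with the distributional derivative on the cylinder.

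\emph{Identification $v=-\nabla\psi$.} Set $w:=v+\nabla\psi$. Since $\nabla:H_0^1\to L^2(\Omega)^d$ is bounded and linear, it commutes with Bochner integration. Hence $\nabla\psi\in W^{1,1}(0,T;L^2(\Omega)^d)$ with $\partial_t\nabla\psi=\nabla h$. Because $v\in H^1(0,T;L^2(\Omega)^d)$, the function $w$ lies in $W^{1,1}(0,T;L^2(\Omega)^d)$, and the first equation of \eqref{eq:uncut-first-order} gives $\partial_t w=-\nabla h+\nabla h=0$ a.e. An absolutely continuous function with zero derivative is constant, so $w(t)=w(0)=v(0)+\nabla\psi_0=0$ for every $t$. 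The continuous representative of $v$ makes the value $v(0)$ meaningful. Since both $v$ and $\nabla\psi$ lie in $L^\infty(0,T;L^2(\Omega)^d)$, the identity holds in that space.

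\emph{Scalar equation and chain rule.} We substitute $h=\psi_t$ and $\diver v=-\diver\nabla\psi=-\Delta\psi$ into the second equation of \eqref{eq:uncut-first-order}. Here $\Delta:H_0^1\to H^{-1}$ is understood weakly, and $\psi\in L^\infty(0,T;H_0^1)$ gives $\Delta\psi\in L^2(0,T;H^{-1}(\Omega))$. This yields $\partial_t\nu(\psi_t)-\Delta\psi=b\Delta\psi_t$ in $L^2(0,T;H^{-1}(\Omega))$.

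\emph{Chain rule under extra regularity.} This is the only genuinely delicate step. Let $u=\psi_t\in L^\infty((0,T)\times\Omega)\cap H^1(0,T;L^2(\Omega))$. I would first mollify $u$ in time to get $u_\varepsilon\to u$ in $H^1_{\mathrm{loc}}(0,T;L^2)$, with $\|u_\varepsilon\|_\infty\le\|u\|_\infty$. For smooth $u_\varepsilon$ the identity $\partial_t u_\varepsilon^2=2u_\varepsilon\partial_t u_\varepsilon$ holds pointwise. We then pass to the limit: the products $u_\varepsilon\partial_t u_\varepsilon$ converge in $L^1_{\mathrm{loc}}(0,T;L^2)$ because $u_\varepsilon$ is uniformly bounded and converges a.e. along a subsequence. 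The equality $\partial_t u^2=2u\,\partial_t u$ is then identified in the sense of distributions, and the right-hand side lies in $L^2(0,T;L^2)$ by the $L^\infty$ bound. Hence $\partial_t\nu(\psi_t)=(1-a\psi_t)\psi_{tt}$ in $L^2(0,T;L^2)$. Inserting this into the conservative identity gives the nonconservative Westervelt equation in $L^2(0,T;H^{-1})$.
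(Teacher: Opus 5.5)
Your proposal is correct and follows the paper's proof. The shared steps are: regularity of $\psi$ as a Bochner primitive of $h$; the identity $v=-\nabla\psi$ from $\partial_t v=-\nabla h$ together with $v(0)=-\nabla\psi_0$ (your zero-derivative argument for $v+\nabla\psi$ is the paper's integrated identity rewritten); and substitution into the second equation. The only difference is in the final chain rule $\partial_t(\psi_t-\frac a2\psi_t^2)=(1-a\psi_t)\psi_{tt}$: you prove it by time mollification and dominated convergence, whereas the paper simply cites the Bochner chain rule for a function with bounded derivative on the essential range of $\psi_t$.
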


\begin{proof}
Because $h\in L^2(0,T;H_0^1(\Omega))\cap L^\infty(0,T;L^2(\Omega))$, the function
\[
\psi(t)=\psi_0+\int_0^t h(s)\,\ds
\]
belongs to
\(
W^{1,\infty}(0,T;L^2(\Omega))\cap W^{1,1}(0,T;H_0^1(\Omega))
\)
and thus it holds $\psi_t=h$ a.e.
Since it hold $v\in H^1(0,T;L^2(\Omega)^d)$, we use its continuous representative in
$C([0,T];L^2(\Omega)^d)$. The first equation in \eqref{eq:uncut-first-order} gives
\[
\partial_t v=-\nabla h
\qquad\text{in }L^2(0,T;L^2(\Omega)^d).
\]
Hence, for every $t\in[0,T]$,
\[
v(t)=v(0)-\int_0^t \nabla h(s)\,\ds
\qquad\text{in }L^2(\Omega)^d.
\]
Using the compatibility condition $v(0)=-\nabla\psi_0$ and the definition of $\psi$, we obtain
\[
v(t)
=
-\nabla\psi_0-\int_0^t \nabla h(s)\,\ds
=
-\nabla\psi(t)
\qquad\text{in }L^2(\Omega)^d
\]
for every $t\in[0,T]$. Thus $v=-\nabla\psi$ in
$L^\infty(0,T;L^2(\Omega)^d)$.
Substituting $h=\psi_t$ and $v=-\nabla\psi$ into the second equation of
\eqref{eq:uncut-first-order} yields
\[
\partial_t \nu(\psi_t)-\Delta\psi=b\Delta\psi_t
\qquad\text{in }L^2(0,T;H^{-1}(\Omega)).
\]
Using $\nu(r)=r-\frac a2 r^2$, we obtain the conservative scalar formulation
\[
\partial_t\!\left(\psi_t-\frac a2(\psi_t)^2\right)-\Delta\psi=b\Delta\psi_t
\qquad\text{in }L^2(0,T;H^{-1}(\Omega)).
\]
\begin{revision}
Under the additional assumptions
$\psi_t\in L^\infty((0,T)\times\Omega)$ and
$\psi_{tt}\in L^2(0,T;L^2(\Omega))$, the map
$r\mapsto r-\frac a2r^2$ has a bounded derivative on the essential range of $\psi_t$.
The Bochner chain rule therefore yields
\[
\partial_t\!\left(\psi_t-\frac a2(\psi_t)^2\right)
=(1-a\psi_t)\psi_{tt}
\qquad\text{in }L^2(0,T;L^2(\Omega)),
\]
which proves the final assertion.
\end{revision}
\end{proof}

\section{Analytical formulation of the cutoff problem}\label{sec:cutoff}

Let $\Omega\subset\R^d$, $d\le 3$, be a bounded Lipschitz domain, let $T>0$, and let $a,b>0$.
We define the cutoff
\[
\TM(r):=\max\bigl\{-M,\min\{r,M\}\bigr\}
\qquad (r\in\R),
\]
with some fixed $M\in(0,1/a)$, and set
\begin{equation}\label{eq:def-mu}
m(r):=1-a\TM(r).
\end{equation}
Then there exist constants $\delta\in(0,1)$ and $\Lambda\ge 1$ such that
\begin{equation}\label{eq:mobility-bounds}
\delta \le m(r) \le \Lambda
\qquad\text{for all } r\in\R.
\end{equation}
Explicitly, $\delta=1-aM$ and $\Lambda=1+aM$.
We introduce the primitives
\begin{equation}\label{eq:def-nu-E}
\nu(r):=\int_0^r m(s)\,\ds,
\qquad
E(r):=\int_0^r s\,m(s)\,\ds = r\nu(r)-\int_0^r \nu(s)\,\ds.
\end{equation}
By \eqref{eq:mobility-bounds}, the map $\nu:\R\to\R$ is strictly increasing and bi-Lipschitz. More precisely,
\begin{equation}\label{eq:nu-bounds}
\delta \abs{r-s} \le \abs{\nu(r)-\nu(s)} \le \Lambda \abs{r-s}
\qquad\text{for all } r,s\in\R,
\end{equation}
and
\begin{equation}\label{eq:E-bounds}
\frac{\delta}{2}\abs{r}^2 \le E(r) \le \frac{\Lambda}{2} \abs{r}^2
\qquad\text{for all } r\in\R.
\end{equation}

We consider the cutoff Westervelt system
\begin{equation}\label{eq:cutoff-westervelt}
\begin{aligned}
\dt v + \nabla h &= 0 &&\text{in } (0,T)\times\Omega,
\\
\dt \nu(h) + \diver v &= b\Delta h &&\text{in } (0,T)\times\Omega,
\\
h &= 0 &&\text{on } (0,T)\times\partial\Omega,
\\
v(0)=v_0, \qquad h(0)&=h_0 &&\text{in } \Omega.
\end{aligned}
\end{equation}
In weak form it is convenient to integrate the divergence term by parts and write the second equation as
\begin{equation}\label{eq:weak-second-form}
\dual{\dt\nu(h)}{\xi} - \ip{v}{\nabla\xi} + b\ip{\nabla h}{\nabla\xi}=0
\qquad\text{for all } \xi\in H_0^1(\Omega).
\end{equation}

\begin{definition}[Weak solution]\label{def:weak-solution}
A pair $(v,h)$ is called a weak solution of \eqref{eq:cutoff-westervelt} if
$$\begin{aligned}
v&\in L^\infty(0,T;L^2(\Omega)^d)\cap H^1(0,T;L^2(\Omega)^d),
\\
h&\in L^\infty(0,T;L^2(\Omega))\cap L^2(0,T;H_0^1(\Omega)),
\\
\nu(h)&\in L^2(0,T;H_0^1(\Omega))\cap H^1(0,T;H^{-1}(\Omega)),
\end{aligned}$$
and if for a.e. $t\in(0,T)$,
\begin{align}
\ip{\dt v(t)}{\varphi} + \ip{\nabla h(t)}{\varphi} &= 0
&&\forall\, \varphi\in L^2(\Omega)^d,
\label{eq:weak-v}
\\
\dual{\dt \nu(h)(t)}{\xi} - \ip{v(t)}{\nabla \xi} + b\ip{\nabla h(t)}{\nabla \xi} &= 0
&&\forall\, \xi\in H_0^1(\Omega),
\label{eq:weak-h}
\end{align}
and if the initial conditions are attained as
\[
v(0)=v_0 \quad\text{in } L^2(\Omega)^d,
\qquad
\nu(h)(0)=\nu(h_0) \quad\text{in } H^{-1}(\Omega).
\]
\begin{revision}
When $\nu(h)$ is interpreted through its $L^2$-continuous representative, both
$\nu(h)(0)$ and $\nu(h_0)$ belong to $L^2(\Omega)$. Since the canonical injection
$L^2(\Omega)\hookrightarrow H^{-1}(\Omega)$ is injective, equality in $H^{-1}(\Omega)$
implies equality of these two $L^2$ functions. Applying the globally Lipschitz Nemytskii map
$\nu^{-1}:L^2(\Omega)\to L^2(\Omega)$ then shows that the latter condition is equivalent to
$h(0)=h_0$ in $L^2(\Omega)$.
\end{revision}
\end{definition}

\begin{remark}[\rev{Why the enthalpy increment is essential}]
\begin{revision}
The formulation in terms of $\nu(h)$ is the key technical point. A coefficient-frozen increment
$m(h^{n-1})(h^n-h^{n-1})/\tau$ does \emph{not} inherit the energy inequality of
Lemma~\ref{lem:discrete-energy} in general. Indeed, testing such a step by $h^n$ would require the
pointwise inequality
\[
m(y)(x-y)x\ge E(x)-E(y),
\qquad x=h^n,\quad y=h^{n-1}.
\]
If $x,y\in[-M,M]$, then $m(s)=1-as$ and $E(s)=\frac12s^2-\frac a3s^3$, and direct calculation gives
\[
m(y)(x-y)x-\bigl(E(x)-E(y)\bigr)
=\frac{(x-y)^2}{6}\bigl(3+2a(x-y)\bigr),
\]
which can be negative. For example, $a=1$, $M=0.9$, $y=0.9$, and $x=-0.9$ give
$m(y)(x-y)x=0.162<0.486=E(x)-E(y)$. Thus uniform positivity of $m$ alone does not provide a
telescoping energy law for the frozen scheme. The comparison in Section~\ref{sec:numerics}
therefore treats its energy trace only as a numerical diagnostic; the proved energy inequality is
specific to the enthalpy increment $\nu(h^n)-\nu(h^{n-1})$.
\end{revision}
\end{remark}

\begin{remark}[Time continuity of the mobility variable]\label{rem:time-continuity-nu}
Since
\[
\nu(h)\in L^2(0,T;H_0^1(\Omega))\cap H^1(0,T;H^{-1}(\Omega)),
\]
the Lions--Magenes theorem \cite{LionsMagenes1972} gives
\(
\nu(h)\in C([0,T];L^2(\Omega)).
\)
Because $\nu^{-1}$ is globally Lipschitz, this also yields
\[
h=\nu^{-1}(\nu(h))\in C([0,T];L^2(\Omega)).
\]
Thus the initial condition may equivalently be written as $h(0)=h_0$ in $L^2(\Omega)$.
\end{remark}

\begin{remark}[Relation to the Alt--Luckhaus framework]
The formulation in terms of the primitive $\nu(h)$ is reminiscent of the classical paper \cite{AltLuckhaus1983} of Alt and
Luckhaus on quasilinear elliptic--parabolic equations with time derivative of the form
$\partial_t b(u)$. In our setting, the map $\nu$ plays the role of the monotone function $b$, and
the implicit time step for $h^n$,
\[
\frac{\nu(h^n)-\nu(h^{n-1})}{\tau} -\diver\big((b+\tau)\nabla h^n\big)
  =
  -\diver v^{n-1},
\]
is a monotone elliptic problem of the same general type.
There is, however, an important structural difference. The full cutoff Westervelt system is not a
single quasilinear parabolic equation for $h$, but a coupled first-order system for $(v,h)$.
Eliminating $v$ leads formally to
\[
\dt \nu(h) - b\Delta h
=
-\diver v_0 + \int_0^t \Delta h(s)\,\ds,
\]
that is, to a parabolic equation with memory. Hence the continuous problem does not fall directly
into the local elliptic--parabolic framework of Alt--Luckhaus. Nevertheless, the Rothe
construction and the compactness strategy are closely aligned with that theory: one first obtains
compactness for $\nu(h_\tau)$ and then transfers it to $h_\tau$ through the bi-Lipschitz continuity
of $\nu^{-1}$.
\end{remark}

\section{Rothe scheme}\label{sec:rothe}

Fix $N\in\N$ and let $\tau=T/N$, $t^n=n\tau$. Given $v^0=v_0\in L^2(\Omega)^d$ and
$h^0=h_0\in L^2(\Omega)$, we construct $(v^n,h^n)_{n=1}^N$ recursively as follows:

Given $(v^{n-1},h^{n-1})$, find $h^n\in H_0^1(\Omega)$ such that
\begin{equation}\label{eq:rothe-scalar}
\frac{1}{\tau}\ip{\nu(h^n)-\nu(h^{n-1})}{\xi}
+(b+\tau)\ip{\nabla h^n}{\nabla\xi}
= \ip{v^{n-1}}{\nabla\xi}
\qquad\forall\,\xi\in H_0^1(\Omega),
\end{equation}
and then define
\begin{equation}\label{eq:rothe-velocity}
v^n := v^{n-1} - \tau \nabla h^n.
\end{equation}
Equivalently, $(v^n,h^n)$ satisfies
\begin{equation}\label{eq:rothe-system}
\begin{aligned}
\frac{1}{\tau} \ip{v^n-v^{n-1}}{\varphi} + \ip{\nabla h^n}{\varphi} &=0
&&\forall\,\varphi\in L^2(\Omega)^d,
\\
\frac{1}{\tau}\ip{\nu(h^n)-\nu(h^{n-1})}{\xi} - \ip{v^n}{\nabla\xi} + b\ip{\nabla h^n}{\nabla\xi} &=0
&&\forall\,\xi\in H_0^1(\Omega).
\end{aligned}
\end{equation}

\begin{proposition}[Existence of one time step]
For every $n\in\{1,\dots,N\}$ and every given $(v^{n-1},h^{n-1})\in L^2(\Omega)^d\times L^2(\Omega)$,
the variational problem \eqref{eq:rothe-scalar} admits a unique solution $h^n\in H_0^1(\Omega)$.
Consequently, $v^n$ is uniquely defined by \eqref{eq:rothe-velocity}.
\end{proposition}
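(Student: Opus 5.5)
We will recast \eqref{eq:rothe-scalar} as an operator equation $A h^n = f$ in $H^{-1}(\Omega)$ and apply the Browder--Minty theorem. Define $A:H_0^1(\Omega)\to H^{-1}(\Omega)$ by
\[
\dual{Au}{\xi} := \frac1\tau\ip{\nu(u)}{\xi} + (b+\tau)\ip{\nabla u}{\nabla\xi},
\]
and the right-hand side $f\in H^{-1}(\Omega)$ by
\[
\dual{f}{\xi} := \frac1\tau\ip{\nu(h^{n-1})}{\xi} + \ip{v^{n-1}}{\nabla\xi}.
\]
The data $f$ is well defined. By \eqref{eq:nu-bounds} with $s=0$ we have $|\nu(r)|\le\Lambda|r|$, so $\nu(h^{n-1})\in L^2(\Omega)$; also $v^{n-1}\in L^2(\Omega)^d$. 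Hence $|\dual{f}{\xi}|\le C\norm{\xi}_{H^1}$. Likewise the Nemytskii map $u\mapsto\nu(u)$ is globally Lipschitz on $L^2(\Omega)$ with constant $\Lambda$, so $A$ is well defined and bounded.

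\textbf{Structural properties of $A$.} Continuity of $A$, and in particular hemicontinuity, follows from the same Lipschitz bound:
\[
\norm{Au-Aw}_{H^{-1}}\le \bigl(\tfrac{\Lambda}{\tau}C_P+(b+\tau)\bigr)\norm{u-w}_{H_0^1},
\]
where $C_P$ is the Poincaré constant. For strong monotonicity, we use that $\nu$ is increasing with derivative $m\ge\delta$, which by \eqref{eq:nu-bounds} and the monotonicity of $\nu$ gives the pointwise bound $(\nu(r)-\nu(s))(r-s)\ge\delta|r-s|^2$. Therefore
\[
\dual{Au-Aw}{u-w}\ge \frac{\delta}{\tau}\norm{u-w}_{L^2}^2+(b+\tau)\norm{\nabla(u-w)}_{L^2}^2\ge (b+\tau)\norm{\nabla(u-w)}_{L^2}^2 .
\]
Since $\norm{\nabla\cdot}_{L^2}$ is a norm on $H_0^1(\Omega)$, $A$ is strongly monotone. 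Strong monotonicity together with $A0=0$ (because $\nu(0)=0$) implies coercivity: $\dual{Au}{u}/\norm{u}\to\infty$.

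\textbf{Conclusion.} The Browder--Minty theorem now yields existence of $h^n\in H_0^1(\Omega)$ with $Ah^n=f$, which is exactly \eqref{eq:rothe-scalar}. For uniqueness, suppose $h,\tilde h$ are two solutions. Subtracting the equations and testing with $h-\tilde h$ gives $(b+\tau)\norm{\nabla(h-\tilde h)}^2\le0$, so $h=\tilde h$. Finally, $v^n=v^{n-1}-\tau\nabla h^n\in L^2(\Omega)^d$ is then uniquely determined by \eqref{eq:rothe-velocity}.

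No step is a real obstacle. The only point needing care is the monotonicity of $A$: it rests on the pointwise inequality for $\nu$, which needs only $m\ge\delta>0$ from \eqref{eq:mobility-bounds}, even though $m$ is merely Lipschitz and not smooth.
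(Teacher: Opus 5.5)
Your proposal is correct and follows the paper's proof essentially step for step: the same operator $A$ and right-hand side, boundedness and (hemi)continuity from the global Lipschitz continuity of $\nu$, monotonicity and coercivity, then the Browder--Minty theorem. The only difference is where monotonicity comes from. You take strong monotonicity from the diffusion term $(b+\tau)\norm{\nabla(u-w)}_{L^2(\Omega)}^2$, so coercivity follows from $A0=0$ and uniqueness is immediate. The paper instead uses the lower bound $\frac{\delta}{\tau}\norm{w-z}_{L^2(\Omega)}^2$ for strict monotonicity and checks coercivity directly.
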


\begin{proof}
Define the operator $A:H_0^1(\Omega)\to H^{-1}(\Omega)$ by
\[
\dual{A(w)}{\xi}
:= \frac1\tau\ip{\nu(w)}{\xi} + (b+\tau)\ip{\nabla w}{\nabla\xi}.
\]
Since $\nu$ is continuous and monotone, the operator $A$ is hemicontinuous and monotone. Since
$\nu$ is globally Lipschitz and $\nu(0)=0$, the Nemytskii map $w\mapsto\nu(w)$ maps bounded
subsets of $L^2(\Omega)$ into bounded subsets of $L^2(\Omega)$. Consequently $A$ maps bounded
subsets of $H_0^1(\Omega)$ into bounded subsets of $H^{-1}(\Omega)$. Moreover,
by \eqref{eq:nu-bounds},
\[
\dual{A(w)-A(z)}{w-z}
= \frac1\tau\ip{\nu(w)-\nu(z)}{w-z} + (b+\tau)\norm{\nabla(w-z)}_{L^2(\Omega)}^2
\ge \frac{\delta}{\tau}\norm{w-z}_{L^2(\Omega)}^2,
\]
so $A$ is strictly monotone. Finally, using \eqref{eq:nu-bounds} and Poincaré's inequality,
\[
\dual{A(w)}{w}
= \frac1\tau \ip{\nu(w)}{w} + (b+\tau)\norm{\nabla w}_{L^2(\Omega)}^2
\ge \frac{\delta}{\tau}\norm{w}_{L^2(\Omega)}^2 + (b+\tau)\norm{\nabla w}_{L^2(\Omega)}^2,
\]
which yields coercivity on $H_0^1(\Omega)$. The right-hand side of \eqref{eq:rothe-scalar},
\[
\xi\mapsto \frac1\tau\ip{\nu(h^{n-1})}{\xi} + \ip{v^{n-1}}{\nabla\xi},
\]
is a bounded linear functional on $H_0^1(\Omega)$. By the Browder--Minty theorem \cite{ZeidlerIIB}, there exists a
unique $h^n\in H_0^1(\Omega)$ solving \eqref{eq:rothe-scalar}.
\end{proof}

\subsection{Discrete a priori estimates}

The next lemma provides the basic energy inequality.

\begin{lemma}[Discrete energy inequality]\label{lem:discrete-energy}
The Rothe iterates satisfy, for every $m\in\{1,\dots,N\}$,
\begin{align}
\!\int_\Omega\! E(h^m)\dx + \frac12 \norm{v^m}_{L^2(\Omega)}^2
+ b\tau \sum_{n=1}^m \norm{\nabla h^n}_{L^2(\Omega)}^2
+ \frac12 \sum_{n=1}^m \norm{v^n\!-\!v^{n-1}}_{L^2(\Omega)}^2
\le
\!\int_\Omega\! E(h^0)\dx + \frac12 \norm{v^0}_{L^2(\Omega)}^2
\label{eq:disc-energy}
\end{align}
In particular,
\begin{equation}\label{eq:uniform-bounds-discrete}
\sup_{1\le n\le N} \norm{v^n}_{L^2(\Omega)}^2
+ \sup_{1\le n\le N} \norm{h^n}_{L^2(\Omega)}^2
+ \tau\sum_{n=1}^N \norm{\nabla h^n}_{L^2(\Omega)}^2
\le C,
\end{equation}
with a constant $C$ independent of $\tau$.
\end{lemma}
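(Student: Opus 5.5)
The plan is to test the two equations of \eqref{eq:rothe-system} with the natural energy multipliers, namely $\varphi=v^n$ in the first and $\xi=h^n$ in the second, multiply by $\tau$, and add. The coupling terms $\tau\ip{\nabla h^n}{v^n}$ and $-\tau\ip{v^n}{\nabla h^n}$ cancel exactly. This is why the system form \eqref{eq:rothe-system}, with $v^n$ rather than $v^{n-1}$ in the second equation, is the right one to test, instead of the scalar form \eqref{eq:rothe-scalar} with its artificial $(b+\tau)$ term. For the velocity we use the algebraic identity
\[
\ip{v^n-v^{n-1}}{v^n}=\tfrac12\norm{v^n}^2-\tfrac12\norm{v^{n-1}}^2+\tfrac12\norm{v^n-v^{n-1}}^2,
\]
which produces the numerical dissipation term in \eqref{eq:disc-energy}. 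The diffusion term gives $b\tau\norm{\nabla h^n}^2$.

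The key step is the pointwise convexity-type inequality
\[
\bigl(\nu(x)-\nu(y)\bigr)x\ \ge\ E(x)-E(y)\qquad\text{for all }x,y\in\R.
\]
I would prove it by introducing the convex conjugate structure. Set $F(z):=\int_0^{z}\nu^{-1}(s)\,\mathrm ds$, which is convex since $\nu^{-1}$ is increasing. Then $F(\nu(r))=E(r)$, which follows from the substitution $s=\nu(\sigma)$, $\mathrm ds=m(\sigma)\mathrm d\sigma$ and the definition \eqref{eq:def-nu-E}. Convexity of $F$ gives
\[
F(\nu(x))-F(\nu(y))\le F'(\nu(x))\bigl(\nu(x)-\nu(y)\bigr)=x\bigl(\nu(x)-\nu(y)\bigr).
\]
Alternatively, one can verify directly that $\bigl(\nu(x)-\nu(y)\bigr)x-E(x)+E(y)=\int_y^x (x-s)m(s)\,\mathrm ds\ge0$, using $E'(s)=s\,m(s)$. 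The integrand is nonnegative whenever $s$ lies between $y$ and $x$, with the orientation of the integral accounting for the sign, and $m>0$. This second route is cleaner, and I would present it. Integrating over $\Omega$ gives $\ip{\nu(h^n)-\nu(h^{n-1})}{h^n}\ge\int_\Omega E(h^n)-E(h^{n-1})\dx$. All terms are finite because $E$ grows quadratically by \eqref{eq:E-bounds} and $h^n\in L^2$.

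Combining these pieces yields a one-step inequality. Summing it from $n=1$ to $m$ telescopes to \eqref{eq:disc-energy}. For \eqref{eq:uniform-bounds-discrete}, use the lower bound $E(r)\ge\frac\delta2 r^2$ and the upper bound $E(h^0)\le\frac\Lambda2|h^0|^2$ from \eqref{eq:E-bounds}. This gives bounds depending only on $\delta,\Lambda,b,\norm{h_0},\norm{v_0}$. Taking the supremum over $m$ completes the proof.

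The only genuinely nontrivial point is the inequality $(\nu(x)-\nu(y))x\ge E(x)-E(y)$. This is where the enthalpy increment matters, in contrast to the frozen scheme in the preceding remark, so I would take care with the sign argument there.
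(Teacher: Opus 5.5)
Your proof is correct and reaches exactly the paper's one-step inequality through the same key convexity argument: your first route is the paper's $\Psi$-argument, and your identity $(\nu(x)-\nu(y))x-E(x)+E(y)=\int_y^x(x-s)m(s)\ds\ge0$ is a valid elementary alternative. The only difference is bookkeeping. The paper tests the scalar form \eqref{eq:rothe-scalar} with $h^n$, so it needs the polarization of $\tau\ip{v^{n-1}}{\nabla h^n}$ together with $\tau^2\norm{\nabla h^n}_{L^2(\Omega)}^2=\norm{v^n-v^{n-1}}_{L^2(\Omega)}^2$ to absorb the $\tau^2$ term from the $(b+\tau)$ stabilization, whereas testing the equivalent system \eqref{eq:rothe-system} with $(v^n,h^n)$ makes the coupling cancel directly and is slightly cleaner.
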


\begin{proof}
We test \eqref{eq:rothe-scalar} with $\xi=h^n$. This gives
\begin{equation}\label{eq:test-hn}
\ip{\nu(h^n)-\nu(h^{n-1})}{h^n}
+ \tau(b+\tau)\norm{\nabla h^n}_{L^2(\Omega)}^2
= \tau\ip{v^{n-1}}{\nabla h^n}.
\end{equation}
Define
\[
\Psi(y):=\int_0^y \nu^{-1}(s)\,\ds.
\]
Since $\nu^{-1}$ is increasing, $\Psi$ is convex. Moreover,
\[
\Psi(\nu(r))=\int_0^{\nu(r)} \nu^{-1}(s)\,\ds = \int_0^r s\,m(s)\,\ds = E(r).
\]
Therefore, by convexity of $\Psi$,
\begin{equation}\label{eq:convex-E-step}
\ip{\nu(h^n)-\nu(h^{n-1})}{h^n}
\ge \int_\Omega E(h^n)\dx - \int_\Omega E(h^{n-1})\dx.
\end{equation}
On the other hand, the update formula \eqref{eq:rothe-velocity} implies
\begin{equation}\label{eq:vel-jump-id}
v^n-v^{n-1} = -\tau\nabla h^n,
\qquad
\norm{v^n-v^{n-1}}_{L^2(\Omega)}^2 = \tau^2\norm{\nabla h^n}_{L^2(\Omega)}^2,
\end{equation}
and therefore, by the polarization identity,
\begin{equation}\label{eq:vel-polarization}
\tau\ip{v^{n-1}}{\nabla h^n}
= -\ip{v^{n-1}}{v^n-v^{n-1}}
= \frac12\norm{v^{n-1}}_{L^2(\Omega)}^2 - \frac12\norm{v^n}_{L^2(\Omega)}^2
+ \frac12\norm{v^n-v^{n-1}}_{L^2(\Omega)}^2.
\end{equation}
Inserting \eqref{eq:vel-polarization} into \eqref{eq:test-hn} and using
\eqref{eq:convex-E-step}, and splitting the diffusion coefficient as
$\tau(b+\tau)\norm{\nabla h^n}^2 = b\tau\norm{\nabla h^n}^2+\tau^2\norm{\nabla h^n}^2$,
yields
\begin{align*}
&\int_\Omega E(h^n)\dx - \int_\Omega E(h^{n-1})\dx
+ \frac12\norm{v^n}_{L^2(\Omega)}^2 - \frac12\norm{v^{n-1}}_{L^2(\Omega)}^2
+ b\tau\norm{\nabla h^n}_{L^2(\Omega)}^2
\\ &\quad+ \tau^2\norm{\nabla h^n}_{L^2(\Omega)}^2
- \frac12\norm{v^n-v^{n-1}}_{L^2(\Omega)}^2
\le 0.
\end{align*}
Here the term $\tau^2\norm{\nabla h^n}^2$ originates from the implicit stabilization $b+\tau$ in
\eqref{eq:rothe-scalar}, whereas the term $-\tfrac12\norm{v^n-v^{n-1}}^2$ comes from the
polarization identity \eqref{eq:vel-polarization}; the two are combined using the
\emph{independent} velocity-jump identity \eqref{eq:vel-jump-id}.
By the second identity in \eqref{eq:vel-jump-id}, the last two terms on the
left-hand side combine into
\[
\tau^2\norm{\nabla h^n}_{L^2(\Omega)}^2
- \frac12\norm{v^n-v^{n-1}}_{L^2(\Omega)}^2
=
\frac12\norm{v^n-v^{n-1}}_{L^2(\Omega)}^2,
\]
so that
\begin{align*}
&\int_\Omega E(h^n)\dx - \int_\Omega E(h^{n-1})\dx
+ \frac12\norm{v^n}_{L^2(\Omega)}^2 - \frac12\norm{v^{n-1}}_{L^2(\Omega)}^2
\\ &\quad+ b\tau\norm{\nabla h^n}_{L^2(\Omega)}^2
+ \frac12\norm{v^n-v^{n-1}}_{L^2(\Omega)}^2
\le 0.
\end{align*}
Summation from $n=1$ to $n=m$ yields \eqref{eq:disc-energy}. Finally,
\eqref{eq:uniform-bounds-discrete} follows from \eqref{eq:E-bounds}.
\end{proof}

We next estimate the discrete time derivative of $\nu(h)$. But first we clarify some notation.

Given discrete values $(v^n,h^n)_{n=0}^N$, we introduce the standard Rothe interpolants.
For $t\in(t^{n-1},t^n]$, we define the piecewise constant functions
\[
\bar h_\tau(t):=h^n,
\qquad
\bar v_\tau(t):=v^n,
\qquad
\bar v_\tau^-(t):=v^{n-1}.
\]
We further define the piecewise affine interpolants
\[
\widehat v_\tau(t)
:=
\frac{t-t^{n-1}}{\tau}v^n+\frac{t^n-t}{\tau}v^{n-1},
\qquad
\widehat \nu_\tau(t)
:=
\frac{t-t^{n-1}}{\tau}\nu(h^n)+\frac{t^n-t}{\tau}\nu(h^{n-1}),
\]
for $t\in[t^{n-1},t^n]$.
Then, by construction,
\[
\partial_t \widehat v_\tau(t)=\frac{v^n-v^{n-1}}{\tau},
\qquad
\partial_t \widehat \nu_\tau(t)=\frac{\nu(h^n)-\nu(h^{n-1})}{\tau}
\qquad\text{for a.e. } t\in(t^{n-1},t^n).
\]

\begin{lemma}[Discrete Aubin--Lions compactness for Rothe interpolants]\label{lem:discrete-aubin-lions}
Let
\(
X_0 \hookrightarrow\hookrightarrow X \hookrightarrow X_1
\)
be Banach spaces with $X_0$ reflexive, with compact embedding $X_0\hookrightarrow X$ and continuous embedding
$X\hookrightarrow X_1$. For each $\tau=T/N$, let $(w^n)_{n=0}^N\subset X_0$ and denote by
$\bar w_\tau$ and $\widehat w_\tau$ the associated piecewise constant and piecewise affine
interpolants. Assume that
\begin{equation}\label{eq:dal-ass}
\|\bar w_\tau\|_{L^2(0,T;X_0)}
+
\|\partial_t \widehat w_\tau\|_{L^2(0,T;X_1)}
+
\|w^0\|_{X_0}
\le C
\end{equation}
uniformly in $\tau$. Then $(\bar w_\tau)_\tau$ is relatively compact in $L^2(0,T;X)$.
Moreover,
\begin{equation}\label{eq:dal-difference}
\|\widehat w_\tau-\bar w_\tau\|_{L^2(0,T;X_1)}
\le
\tau \|\partial_t \widehat w_\tau\|_{L^2(0,T;X_1)}.
\end{equation}
In particular, after extraction of a subsequence,
\[
\bar w_\tau \to w
\qquad\text{strongly in }L^2(0,T;X)
\]
for some $w\in L^2(0,T;X_0)$.
\end{lemma}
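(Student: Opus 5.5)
We first prove the difference estimate \eqref{eq:dal-difference} by a direct computation, and then obtain compactness of $(\bar w_\tau)$ from the classical Aubin--Lions lemma applied to $\widehat w_\tau$. The estimate transfers this compactness back to $\bar w_\tau$.

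\emph{Difference estimate.} On $(t^{n-1},t^n]$ we have
\[
\widehat w_\tau(t)-\bar w_\tau(t)=\frac{t-t^n}{\tau}\,(w^n-w^{n-1})=(t-t^n)\,\partial_t\widehat w_\tau(t),
\]
because $\partial_t\widehat w_\tau$ is constant on the interval. Since $\abs{t-t^n}\le\tau$, squaring the $X_1$-norm and integrating over each interval gives \eqref{eq:dal-difference}. A sharper constant $\tau/\sqrt3$ is available but not needed.

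\emph{Bounds for $\widehat w_\tau$.} Next we show that $(\widehat w_\tau)$ is bounded in $L^2(0,T;X_0)\cap H^1(0,T;X_1)$. The derivative bound is part of \eqref{eq:dal-ass}. For the $X_0$-bound, on each interval $\widehat w_\tau(t)$ is a convex combination of $w^{n-1}$ and $w^n$. Hence
\[
\norm{\widehat w_\tau}_{L^2(0,T;X_0)}^2\le \tau\sum_{n=1}^N\bigl(\norm{w^n}_{X_0}^2+\norm{w^{n-1}}_{X_0}^2\bigr)\le 2\norm{\bar w_\tau}_{L^2(0,T;X_0)}^2+\tau\norm{w^0}_{X_0}^2 .
\]
This is exactly where the assumption $\norm{w^0}_{X_0}\le C$ enters, since $w^0$ is not seen by $\bar w_\tau$.

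\emph{Compactness.} The Aubin--Lions lemma, with $X_0$ reflexive, $X_0\hookrightarrow\hookrightarrow X\hookrightarrow X_1$, and $p=q=2$, yields that $(\widehat w_\tau)$ is relatively compact in $L^2(0,T;X)$.

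The main obstacle is transferring this compactness to $\bar w_\tau$. The difference estimate only controls $\widehat w_\tau-\bar w_\tau$ in the weaker space $X_1$, while convergence is needed in $X$. I would resolve this with the Ehrling (Lions) lemma: for every $\varepsilon>0$ there is $C_\varepsilon$ such that $\norm{u}_X\le\varepsilon\norm{u}_{X_0}+C_\varepsilon\norm{u}_{X_1}$ for all $u\in X_0$. Applying it to $u=\widehat w_\tau-\bar w_\tau$ gives
\[
\norm{\widehat w_\tau-\bar w_\tau}_{L^2(0,T;X)}\le \varepsilon\, C+C_\varepsilon\,\tau\, C ,
\]
and therefore $\limsup_{\tau\to0}\norm{\widehat w_\tau-\bar w_\tau}_{L^2(0,T;X)}\le C\varepsilon$ for every $\varepsilon>0$, that is, the difference tends to $0$ in $L^2(0,T;X)$. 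Now take any sequence $\tau_k\to0$. Extract a subsequence along which $\widehat w_{\tau_k}\to w$ in $L^2(0,T;X)$; then also $\bar w_{\tau_k}\to w$ in $L^2(0,T;X)$. (If a sequence has $\tau$ bounded away from zero, it consists of finitely many distinct functions, and a constant subsequence trivially converges.) This proves relative compactness.

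\emph{Regularity of the limit.} Finally, $w\in L^2(0,T;X_0)$. The sequence $(\bar w_\tau)$ is bounded in the reflexive space $L^2(0,T;X_0)$, so a further subsequence converges weakly there to some $\tilde w$. Weak convergence in $L^2(0,T;X_0)$ implies weak convergence in $L^2(0,T;X)$, and limits are unique, so $\tilde w=w$.
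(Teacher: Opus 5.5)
Your proof is correct, but at the one nontrivial step it takes a different route from the paper: the transfer of compactness to the piecewise constant interpolants.

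The paper never upgrades the difference estimate beyond $X_1$. It uses \eqref{eq:dal-difference} to bound the time translates of $\bar w_\tau$ directly, $\|\bar w_\tau(\cdot+\sigma)-\bar w_\tau\|_{L^2(0,T-\sigma;X_1)}\le C(\sigma+\tau)$. It treats the finitely many steps with $\tau\ge\varepsilon$ separately to get uniform decay of translates. It then applies Simon's translate criterion (boundedness in $L^2(0,T;X_0)$ plus vanishing $X_1$-translates). Finally it identifies the limit of $\bar w_\tau$ with that of $\widehat w_\tau$ through their common distributional limit.

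You instead use the Ehrling inequality to show that $\widehat w_\tau-\bar w_\tau\to0$ already in $L^2(0,T;X)$. Relative compactness of $\bar w_\tau$ and equality of the two limits then follow at once from the Aubin--Lions compactness of $\widehat w_\tau$.

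What each route buys:
\begin{itemize}
\item Your version is shorter and avoids the uniformity bookkeeping for translates. It also makes explicit the mechanism hidden inside Simon's translate theorem, whose proof itself rests on an Ehrling-type inequality. It needs injectivity of $X\hookrightarrow X_1$, but that is implicit in the word embedding and required by Simon's theorem anyway.
\item The paper's version works directly with the piecewise constant functions in the weak space $X_1$. It produces an explicit translate estimate, which is the standard Rothe-method argument.
\end{itemize}

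Two small remarks. Since $X_1$ is not assumed reflexive, the Aubin--Lions lemma you invoke should be Simon's version rather than Lions' original one. Identifying $w\in L^2(0,T;X_0)$ through a weak limit of $\bar w_\tau$ instead of $\widehat w_\tau$ is an inessential variation.
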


\begin{remark}[Relation to standard Rothe compactness arguments]
Lemma~\ref{lem:discrete-aubin-lions} is a Rothe-type compactness statement for piecewise constant
interpolants. It is closely related to the standard compactness toolbox for time-discrete
approximations and to the classical comparison between piecewise affine and piecewise constant
interpolants used in Rothe methods; see, for example,
\cite[Section~11.1]{schweizer2013partielle}.
\end{remark}

\begin{proof}
For $t\in(t^{n-1},t^n]$ one has
\[
\widehat w_\tau(t)-\bar w_\tau(t)
=
\frac{t^n-t}{\tau}(w^{n-1}-w^n)
=
-(t^n-t)\,\partial_t\widehat w_\tau(t).
\]
Hence \eqref{eq:dal-difference} follows immediately.
Next we show that $(\widehat w_\tau)_\tau$ is bounded in $L^2(0,T;X_0)$. On each interval
$[t^{n-1},t^n]$ we may write
\[
\widehat w_\tau(t)=\theta w^n+(1-\theta)w^{n-1},
\qquad
\theta=\frac{t-t^{n-1}}{\tau}\in[0,1].
\]
Since the square of the norm is convex,
\[
\|\widehat w_\tau(t)\|_{X_0}^2
\le
\theta \|w^n\|_{X_0}^2+(1-\theta)\|w^{n-1}\|_{X_0}^2.
\]
Integrating over $[t^{n-1},t^n]$ and summing over $n$ yields
\[
\|\widehat w_\tau\|_{L^2(0,T;X_0)}^2
\le
\|\bar w_\tau\|_{L^2(0,T;X_0)}^2 + \frac{\tau}{2}\|w^0\|_{X_0}^2
\le C.
\]
Together with \eqref{eq:dal-ass}, this shows that $(\widehat w_\tau)_\tau$ is bounded in
\[
L^2(0,T;X_0)\cap H^1(0,T;X_1).
\]
\begin{revision}
Since $X_0$ is reflexive, after extraction of a subsequence there is
$w_0\in L^2(0,T;X_0)$ such that
\[
\widehat w_\tau\rightharpoonup w_0
\qquad\text{weakly in }L^2(0,T;X_0).
\]
The Aubin--Lions--Simon compactness theorem \cite{Simon1987}, applied to this subsequence, yields a
further subsequence and a function $w\in L^2(0,T;X)$ such that
\[
\widehat w_\tau\to w
\qquad\text{strongly in }L^2(0,T;X).
\]
Because the embedding $X_0\hookrightarrow X$ is continuous, the weak $L^2(0,T;X_0)$ limit has image
$w_0$ as a weak $L^2(0,T;X)$ limit. Uniqueness of the weak limit in $L^2(0,T;X)$ gives
$w=w_0$; in particular, $w\in L^2(0,T;X_0)$.
\end{revision}
To obtain compactness of the piecewise constant interpolants, let $0<\sigma<T$. For a.e.\
$t\in(0,T-\sigma)$,
\begin{align*}
\bar w_\tau(t+\sigma)-\bar w_\tau(t)
&=
\widehat w_\tau(t+\sigma)-\widehat w_\tau(t)
+
\bigl(\bar w_\tau(t+\sigma)-\widehat w_\tau(t+\sigma)\bigr)
-
\bigl(\bar w_\tau(t)-\widehat w_\tau(t)\bigr)
\\
&=
\int_t^{t+\sigma}\partial_s\widehat w_\tau(s)\,\ds
+
\bigl(\bar w_\tau(t+\sigma)-\widehat w_\tau(t+\sigma)\bigr)
-
\bigl(\bar w_\tau(t)-\widehat w_\tau(t)\bigr).
\end{align*}
Therefore, by Minkowski's inequality and \eqref{eq:dal-difference},
\begin{revision}
\begin{align*}
\|\bar w_\tau(\cdot+\sigma)-\bar w_\tau(\cdot)\|_{L^2(0,T-\sigma;X_1)}
&\le
\left\|
\int_\cdot^{\cdot+\sigma}\partial_s\widehat w_\tau(s)\,\ds
\right\|_{L^2(0,T-\sigma;X_1)}
+
2\|\bar w_\tau-\widehat w_\tau\|_{L^2(0,T;X_1)}
\\
&\le
\sigma\|\partial_t\widehat w_\tau\|_{L^2(0,T;X_1)}
+
2\tau\|\partial_t\widehat w_\tau\|_{L^2(0,T;X_1)}
\\
&\le
C\bigl(\sigma+\tau\bigr).
\end{align*}
\end{revision}
\begin{revision}
The family $(\bar w_\tau)_\tau$ is bounded in $L^2(0,T;X_0)$ by assumption. To see that its time
translates vanish uniformly, fix $\varepsilon>0$. Among the time steps $\tau=T/N$, only finitely
many satisfy $\tau\ge\varepsilon$. For each of those finitely many fixed functions, translations
vanish in $L^2(0,T;X_1)$ as $\sigma\to0$. For all remaining time steps, the preceding estimate gives
\[
\|\bar w_\tau(\cdot+\sigma)-\bar w_\tau(\cdot)\|_{L^2(0,T-\sigma;X_1)}
\le C(\sigma+\varepsilon).
\]
Consequently,
\[
\limsup_{\sigma\to0}\sup_\tau
\|\bar w_\tau(\cdot+\sigma)-\bar w_\tau(\cdot)\|_{L^2(0,T-\sigma;X_1)}
\le C\varepsilon.
\]
Letting $\varepsilon\downarrow0$ proves uniform decay of the time translates. Simon's time-translate
compactness criterion \cite{Simon1987} therefore shows that $(\bar w_\tau)_\tau$ is relatively compact
in $L^2(0,T;X)$.

We now extract this compactness subsequence from the subsequence already fixed for
$\widehat w_\tau$, and write
\[
\bar w_\tau\to \bar w
\qquad\text{strongly in }L^2(0,T;X).
\]
By \eqref{eq:dal-difference}, $\widehat w_\tau-\bar w_\tau\to0$ in $L^2(0,T;X_1)$. Since
$\widehat w_\tau\to w$ in $L^2(0,T;X)$ and $X\hookrightarrow X_1$ continuously, both interpolants
have the same distributional limit. Hence $\bar w=w$, which proves the final assertion.
\end{revision}
\end{proof}

\begin{lemma}[Bound for the discrete time derivative]\label{lem:time-derivative}
Let
\[
\bar h_\tau(t):=h^n,
\qquad
\bar v_\tau(t):=v^n,
\qquad
\bar v_\tau^-(t):=v^{n-1}
\qquad \text{for } t\in(t^{n-1},t^n],
\]
and let $\widehat{\nu}_\tau$ be the piecewise affine interpolant of the values $\nu(h^n)$. Then
\begin{equation}\label{eq:dtnu-bound}
\norm{\dt \widehat{\nu}_\tau}_{L^2(0,T;H^{-1}(\Omega))}
\le C,
\end{equation}
with a constant $C$ independent of $\tau$.
\end{lemma}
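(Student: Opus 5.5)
The plan is to read off the discrete time derivative directly from the Rothe equation \eqref{eq:rothe-scalar} by duality, and then control every term with the uniform bounds of Lemma~\ref{lem:discrete-energy}. On each interval $(t^{n-1},t^n)$ we have $\dt\widehat\nu_\tau=(\nu(h^n)-\nu(h^{n-1}))/\tau$. For any $\xi\in H_0^1(\Omega)$, \eqref{eq:rothe-scalar} gives
\[
\Bigl(\frac{\nu(h^n)-\nu(h^{n-1})}{\tau},\xi\Bigr)
= \ip{v^{n-1}}{\nabla\xi}-(b+\tau)\ip{\nabla h^n}{\nabla\xi}.
\]
Here we identify the $L^2$ function $\nu(h^n)-\nu(h^{n-1})$ with an element of $H^{-1}(\Omega)$ in the canonical way. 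Taking the supremum over $\norm{\nabla\xi}_{L^2(\Omega)}\le1$, with $H_0^1$ normed by the gradient thanks to Poincaré, yields
\[
\norm{\dt\widehat\nu_\tau(t)}_{H^{-1}(\Omega)}\le \norm{v^{n-1}}_{L^2(\Omega)}+(b+\tau)\norm{\nabla h^n}_{L^2(\Omega)}.
\]
Equivalently, one can use the form \eqref{eq:rothe-system} with $v^n$ in place of $v^{n-1}$ and coefficient $b$; either version works.

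Next we square this pointwise bound, integrate over $(t^{n-1},t^n)$, and sum over $n$. This gives
\[
\norm{\dt\widehat\nu_\tau}_{L^2(0,T;H^{-1}(\Omega))}^2\le 2\tau\sum_{n=1}^N\norm{v^{n-1}}_{L^2(\Omega)}^2+2(b+T)^2\,\tau\sum_{n=1}^N\norm{\nabla h^n}_{L^2(\Omega)}^2 .
\]
For the first sum we use $\tau\sum_{n}\norm{v^{n-1}}^2\le T\max_{0\le n\le N}\norm{v^n}^2$. This is bounded by \eqref{eq:uniform-bounds-discrete} together with $\norm{v^0}=\norm{v_0}$. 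The second sum is bounded directly by \eqref{eq:uniform-bounds-discrete}. The constant in \eqref{eq:uniform-bounds-discrete} depends only on $\int_\Omega E(h_0)\dx+\frac12\norm{v_0}^2\le\frac\Lambda2\norm{h_0}^2+\frac12\norm{v_0}^2$, via \eqref{eq:E-bounds}. Hence $C$ depends only on $T,b,\Lambda,\delta$ and the data, and not on $\tau$. We also use $\tau\le T$ to bound the factor $b+\tau$ uniformly.

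There is essentially no obstacle here. The only points needing care are bookkeeping ones. The index $n-1$ on $v$ requires including $n=0$, which is handled by the initial datum. The identification of $L^2$ functions as $H^{-1}$ functionals must be consistent with the duality pairing $\dual{\cdot}{\cdot}$ used in Definition~\ref{def:weak-solution}.
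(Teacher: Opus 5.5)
Your proposal is correct and follows the paper's proof essentially verbatim: you read off $\dt\widehat\nu_\tau$ from \eqref{eq:rothe-scalar} by duality to get $\norm{\dt\widehat\nu_\tau(t)}_{H^{-1}(\Omega)}\le\norm{\bar v_\tau^-(t)}_{L^2(\Omega)}+(b+\tau)\norm{\nabla\bar h_\tau(t)}_{L^2(\Omega)}$, then square, integrate over $(0,T)$, and invoke \eqref{eq:uniform-bounds-discrete}. Your extra bookkeeping (the $n=0$ term via $v^0=v_0$, the bound $b+\tau\le b+T$, and the explicit dependence of $C$ on the data through \eqref{eq:E-bounds}) simply spells out details the paper leaves implicit.
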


\begin{proof}
For $t\in(t^{n-1},t^n]$, we recall that the derivative $\dt\widehat{\nu}_\tau$ is given by
\(
\dt\widehat{\nu}_\tau(t)=\frac{1}{\tau} (\nu(h^n)-\nu(h^{n-1})).
\)
Using \eqref{eq:rothe-scalar}, for every $\xi\in H_0^1(\Omega)$ we obtain
\[
\dual{\dt\widehat{\nu}_\tau(t)}{\xi}
= \ip{\bar v_\tau^-(t)}{\nabla\xi} - (b+\tau)\ip{\nabla\bar h_\tau(t)}{\nabla\xi}.
\]
Hence
\[
\norm{\dt\widehat{\nu}_\tau(t)}_{H^{-1}(\Omega)}
\le \norm{\bar v_\tau^-(t)}_{L^2(\Omega)} + (b+\tau)\norm{\nabla \bar h_\tau(t)}_{L^2(\Omega)}.
\]
Squaring and integrating over $(0,T)$, and using \eqref{eq:uniform-bounds-discrete}, proves
\eqref{eq:dtnu-bound}.
\end{proof}

For the velocity variable we also record the immediate consequence of \eqref{eq:rothe-velocity}.

\begin{lemma}[Time regularity of the velocity]\label{lem:v-time-reg}
Let $\widehat v_\tau$ be the piecewise affine interpolant of the values $v^n$. Then
\begin{equation}\label{eq:v-time-bound}
\dt \widehat v_\tau = -\nabla \bar h_\tau
\qquad\text{a.e. in } (0,T),
\end{equation}
and therefore
\begin{equation}\label{eq:v-H1-bound}
\norm{\widehat v_\tau}_{H^1(0,T;L^2(\Omega)^d)} \le C.
\end{equation}
\end{lemma}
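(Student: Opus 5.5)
The identity \eqref{eq:v-time-bound} follows directly from the update rule \eqref{eq:rothe-velocity}. By construction of the piecewise affine interpolant, for a.e.\ $t\in(t^{n-1},t^n)$ we have $\dt\widehat v_\tau(t)=(v^n-v^{n-1})/\tau$. By \eqref{eq:rothe-velocity} this quotient equals $-\nabla h^n=-\nabla\bar h_\tau(t)$. Since the partition points form a null set, the identity holds a.e.\ in $(0,T)$ as an equality in $L^2(\Omega)^d$.

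For the bound \eqref{eq:v-H1-bound}, I estimate the two parts of the $H^1(0,T;L^2(\Omega)^d)$ norm separately.

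\emph{Derivative part.} The identity just proved gives
\[
\norm{\dt\widehat v_\tau}_{L^2(0,T;L^2(\Omega)^d)}^2=\tau\sum_{n=1}^N\norm{\nabla h^n}_{L^2(\Omega)}^2\le C,
\]
where the last inequality is \eqref{eq:uniform-bounds-discrete}.

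\emph{Function part.} On each interval $[t^{n-1},t^n]$ the function $\widehat v_\tau(t)$ is a convex combination of $v^{n-1}$ and $v^n$. Hence
\[
\norm{\widehat v_\tau(t)}_{L^2(\Omega)}\le\max\{\norm{v^{n-1}}_{L^2(\Omega)},\norm{v^n}_{L^2(\Omega)}\}.
\]
The right-hand side is bounded uniformly in $n$ and $\tau$: for $n\ge1$ by \eqref{eq:uniform-bounds-discrete}, and for $n=0$ because $v^0=v_0$ is fixed. Therefore $\widehat v_\tau$ is bounded in $L^\infty(0,T;L^2(\Omega)^d)$, and hence in $L^2(0,T;L^2(\Omega)^d)$ with a constant of order $T$. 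Adding the two parts yields \eqref{eq:v-H1-bound} with $C$ independent of $\tau$.

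No real obstacle is expected. The only point needing care is the initial value $v^0$, which is not covered by the supremum over $1\le n\le N$ in \eqref{eq:uniform-bounds-discrete}. It is handled either by the explicit right-hand side of \eqref{eq:disc-energy} or simply because $v_0$ is given data.
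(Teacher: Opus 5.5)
Your proposal is correct and follows essentially the same route as the paper. The paper likewise reads off the identity from $v^n-v^{n-1}=-\tau\nabla h^n$ and bounds the derivative part by the energy estimate. It controls the $L^2$ part via convexity of the squared norm on each interval, where you use the pointwise $\max\{\norm{v^{n-1}},\norm{v^n}\}$ bound, and this difference is immaterial.
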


\begin{revision}
\begin{proof}
The identity \eqref{eq:v-time-bound} follows directly from
$v^n-v^{n-1}=-\tau\nabla h^n$ on every interval $(t^{n-1},t^n)$. Hence
\[
\|\partial_t\widehat v_\tau\|_{L^2(0,T;L^2(\Omega)^d)}
=\|\nabla\bar h_\tau\|_{L^2(0,T;L^2(\Omega)^d)}\le C
\]
by Lemma~\ref{lem:discrete-energy}. Moreover, convexity of the squared norm gives, on
$[t^{n-1},t^n]$,
\[
\|\widehat v_\tau(t)\|_{L^2(\Omega)^d}^2
\le \theta\|v^n\|_{L^2(\Omega)^d}^2
 +(1-\theta)\|v^{n-1}\|_{L^2(\Omega)^d}^2,
\qquad \theta=\frac{t-t^{n-1}}{\tau}.
\]
After integration and summation, the nodal energy bound yields
$\|\widehat v_\tau\|_{L^2(0,T;L^2(\Omega)^d)}\le C$. Combining the two estimates proves
\eqref{eq:v-H1-bound}.
\end{proof}
\end{revision}

\begin{lemma}[Bounds for the mobility variable]\label{lem:nu-spatial-bounds}
Assume $h_0\in H_0^1(\Omega)$. Then, for every $n=0,\dots,N$, one has $\nu(h^n)\in H_0^1(\Omega)$ and
\begin{equation}\label{eq:nuhn-bound}
\sup_{0\le n\le N}\norm{\nu(h^n)}_{L^2(\Omega)}^2
+
\tau\sum_{n=0}^N \norm{\nabla \nu(h^n)}_{L^2(\Omega)}^2
\le C,
\end{equation}
\rev{with a constant $C$ independent of $\tau$ but depending, in addition to the energy data and the fixed parameters, on $T$ and $\|h_0\|_{H_0^1(\Omega)}$.}
Moreover,
\begin{equation}\label{eq:nu-interpolants-bound}
\nu(\bar h_\tau)\ \text{and}\ \widehat\nu_\tau
\quad\text{are bounded in }L^2(0,T;H_0^1(\Omega)).
\end{equation}
\end{lemma}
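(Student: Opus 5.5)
The plan is to reduce everything to the energy bound \eqref{eq:uniform-bounds-discrete} through a pointwise chain rule for $\nu$. Since $\TM$ is continuous, the mobility $m=1-a\TM$ is continuous and bounded by \eqref{eq:mobility-bounds}. Hence $\nu\in C^1(\R)$ with $\nu'=m$, $\abs{\nu'}\le\Lambda$ and $\nu(0)=0$. By the standard chain rule for globally Lipschitz $C^1$ functions with bounded derivative composed with Sobolev functions (Stampacchia/Marcus--Mizel), for every $w\in H_0^1(\Omega)$ we get $\nu(w)\in H_0^1(\Omega)$ and
\[
\nabla\nu(w)=m(w)\nabla w \quad\text{a.e. in }\Omega .
\]
The $H_0^1$ trace property follows because $\nu(0)=0$: approximate $w$ by $C_c^\infty$ functions $w_k$; then $\nu(w_k)$ is Lipschitz with compact support and converges to $\nu(w)$ in $H^1$, or at least weakly in $H^1$, which suffices since $H_0^1$ is weakly closed. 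I expect this step to be the only point requiring some care, and it is settled by citing the chain rule, using that $m$ is continuous with no kinks.

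I then apply the chain rule to $h^n$. For $n\ge1$ we have $h^n\in H_0^1(\Omega)$ by the existence result for one time step, and for $n=0$ we have $h^0=h_0\in H_0^1(\Omega)$ by assumption. This gives $\nu(h^n)\in H_0^1(\Omega)$ together with the pointwise bounds
\[
\abs{\nu(h^n)}\le\Lambda\abs{h^n},
\qquad
\abs{\nabla\nu(h^n)}\le\Lambda\abs{\nabla h^n},
\]
where the first uses \eqref{eq:nu-bounds} with $s=0$. The $L^2$ bound is therefore $\sup_n\norm{\nu(h^n)}^2\le\Lambda^2\sup_n\norm{h^n}^2$. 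The term with $n=0$ is bounded by $\Lambda^2\norm{h_0}^2$, and the terms with $n\ge1$ are bounded by Lemma~\ref{lem:discrete-energy}. For the gradient sum I split off $n=0$:
\[
\tau\sum_{n=0}^N\norm{\nabla\nu(h^n)}^2
\le \Lambda^2 T\norm{\nabla h_0}^2+\Lambda^2\tau\sum_{n=1}^N\norm{\nabla h^n}^2 .
\]
Here $\tau\le T$ was used for the first term, and \eqref{eq:uniform-bounds-discrete} controls the second. This explains the dependence of $C$ on $T$ and $\norm{h_0}_{H_0^1(\Omega)}$, and proves \eqref{eq:nuhn-bound}.

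For \eqref{eq:nu-interpolants-bound}, the piecewise constant interpolant satisfies
\[
\norm{\nu(\bar h_\tau)}_{L^2(0,T;H_0^1)}^2=\tau\sum_{n=1}^N\norm{\nu(h^n)}_{H_0^1}^2,
\]
which is bounded by the previous step (using Poincaré's inequality or the $L^2$ part to control the full $H_0^1$ norm). For $\widehat\nu_\tau$ I will repeat the convexity argument from the proof of Lemma~\ref{lem:discrete-aubin-lions}. On $[t^{n-1},t^n]$ we have $\widehat\nu_\tau=\theta\nu(h^n)+(1-\theta)\nu(h^{n-1})$, so convexity of the squared norm, integration and summation give
\[
\norm{\widehat\nu_\tau}_{L^2(0,T;H_0^1)}^2\le\norm{\nu(\bar h_\tau)}_{L^2(0,T;H_0^1)}^2+\tfrac{\tau}{2}\norm{\nu(h_0)}_{H_0^1}^2 .
\]
Both terms on the right are bounded by \eqref{eq:nuhn-bound}.
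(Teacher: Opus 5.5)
Your proof is correct and follows the paper's argument essentially step by step: the chain rule $\nabla\nu(h^n)=m(h^n)\nabla h^n$ with the bound $\Lambda$, splitting off the $n=0$ gradient term (controlled by $\|h_0\|_{H_0^1(\Omega)}$), and convexity of the squared norm for $\widehat\nu_\tau$. The differences are cosmetic: you use the chain rule for $C^1$ functions with bounded derivative, which applies because $m$ is continuous, where the paper cites the Lipschitz chain rule; and you bound $\tau\le T$ in the initial term, where the paper keeps the factor $\tau$.
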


\begin{proof}
Since $\nu$ is globally Lipschitz and $\nu(0)=0$, the Sobolev chain rule for Lipschitz functions
\cite{P.Ziemer1989} implies that $\nu(h^n)\in H_0^1(\Omega)$ and
\[
\nabla \nu(h^n)=\nu'(h^n)\nabla h^n = m(h^n)\nabla h^n
\qquad\text{a.e. in }\Omega.
\]
By \eqref{eq:mobility-bounds} and \eqref{eq:uniform-bounds-discrete},
\[
\norm{\nu(h^n)}_{L^2(\Omega)} \le \Lambda\norm{h^n}_{L^2(\Omega)},
\qquad
\norm{\nabla \nu(h^n)}_{L^2(\Omega)} \le \Lambda\norm{\nabla h^n}_{L^2(\Omega)}\rev{.}
\]
\begin{revision}
The energy estimate controls the gradient terms for $n\ge1$, while the initial term is estimated
separately. Thus
\[
\tau\sum_{n=0}^N\|\nabla\nu(h^n)\|_{L^2(\Omega)}^2
\le
\tau\Lambda^2\|\nabla h_0\|_{L^2(\Omega)}^2
+
\Lambda^2\tau\sum_{n=1}^N\|\nabla h^n\|_{L^2(\Omega)}^2
\le C.
\]
The $L^2$ part of the time-integrated $H_0^1$ norm follows from
\[
\tau\sum_{n=0}^N\|\nu(h^n)\|_{L^2(\Omega)}^2
\le (T+\tau)\sup_{0\le n\le N}\|\nu(h^n)\|_{L^2(\Omega)}^2.
\]
Together with the displayed pointwise bounds, these estimates prove \eqref{eq:nuhn-bound} and the
bound for $\nu(\bar h_\tau)$. For $\widehat\nu_\tau$, note that on each interval
$[t^{n-1},t^n]$,
\end{revision}
\[
\widehat\nu_\tau(t)
=
\theta \nu(h^n)+(1-\theta)\nu(h^{n-1}),
\qquad
\theta=\frac{t-t^{n-1}}{\tau}\in[0,1].
\]
Since the square of the norm is convex, we obtain
\[
\norm{\widehat\nu_\tau(t)}_{H_0^1(\Omega)}^2
\le
\theta \norm{\nu(h^n)}_{H_0^1(\Omega)}^2
+
(1-\theta)\norm{\nu(h^{n-1})}_{H_0^1(\Omega)}^2.
\]
Integrating over $[t^{n-1},t^n]$ and summing in $n$ yields
\[
\norm{\widehat\nu_\tau}_{L^2(0,T;H_0^1(\Omega))}^2
\le
\tau\sum_{n=0}^N \norm{\nu(h^n)}_{H_0^1(\Omega)}^2
\le C,
\]
which proves \eqref{eq:nu-interpolants-bound}.
\end{proof}

\begin{lemma}[Compactness of the Rothe interpolants]\label{lem:compactness}
Let $(v^n,h^n)_{n=0}^N$ be Rothe iterates associated with data
\[
v_0\in L^2(\Omega)^d,
\qquad
h_0\in H_0^1(\Omega),
\]
and let $\bar v_\tau,\widehat v_\tau,\bar h_\tau,\widehat\nu_\tau$ be the associated interpolants.
Assume the uniform bounds of Lemmas~\ref{lem:discrete-energy}, \ref{lem:time-derivative},
\ref{lem:v-time-reg}, and \ref{lem:nu-spatial-bounds}. Then there exist a subsequence, not
relabeled, and functions
\[
v\in H^1(0,T;L^2(\Omega)^d)\cap L^\infty(0,T;L^2(\Omega)^d),
\qquad
h\in L^\infty(0,T;L^2(\Omega))\cap L^2(0,T;H_0^1(\Omega)),
\]
such that
\begin{align}
\widehat v_\tau &\rightharpoonup v
&&\text{weakly in }H^1(0,T;L^2(\Omega)^d), \label{eq:cv-vhat}\\
\widehat v_\tau &\to v
&&\text{strongly in }C([0,T];H^{-1}(\Omega)^d), \label{eq:cv-vhat-strong}\\
\bar v_\tau &\stackrel{*}{\rightharpoonup} v
&&\text{weakly-* in }L^\infty(0,T;L^2(\Omega)^d), \label{eq:cv-vbar-star}\\
\bar v_\tau &\rightharpoonup v
&&\text{weakly in }L^2(0,T;L^2(\Omega)^d), \label{eq:cv-vbar}\\
\widehat\nu_\tau &\rightharpoonup \nu(h)
&&\text{weakly in }L^2(0,T;H_0^1(\Omega)), \label{eq:cv-nuhat-weak}\\
\widehat\nu_\tau &\rightharpoonup \nu(h)
&&\text{weakly in }H^1(0,T;H^{-1}(\Omega)), \label{eq:cv-nuhat-time}\\
\widehat\nu_\tau &\to \nu(h)
&&\text{strongly in }L^2(0,T;L^2(\Omega)), \label{eq:cv-nuhat-strong}\\
\nu(\bar h_\tau) &\to \nu(h)
&&\text{strongly in }L^2(0,T;L^2(\Omega)), \label{eq:cv-nubar-strong}\\
\bar h_\tau &\stackrel{*}{\rightharpoonup} h
&&\text{weakly-* in }L^\infty(0,T;L^2(\Omega)), \label{eq:cv-hbar-star}\\
\bar h_\tau &\rightharpoonup h
&&\text{weakly in }L^2(0,T;H_0^1(\Omega)), \label{eq:cv-hbar-weak}\\
\bar h_\tau &\to h
&&\text{strongly in }L^2(0,T;L^2(\Omega)). \label{eq:cv-hbar-strong}
\end{align}
\end{lemma}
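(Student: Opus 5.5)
The plan is to extract successive subsequences from the uniform bounds, obtain strong compactness for the mobility variable first, and then transfer it to $h$ through $\nu^{-1}$. Throughout, the bounds come from Lemma~\ref{lem:discrete-energy}, \ref{lem:time-derivative}, \ref{lem:v-time-reg} and \ref{lem:nu-spatial-bounds}.

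\textbf{Velocity.} By Lemma~\ref{lem:v-time-reg}, $\widehat v_\tau$ is bounded in $H^1(0,T;L^2(\Omega)^d)$. Reflexivity gives a weakly convergent subsequence, which is \eqref{eq:cv-vhat}. Since $L^2(\Omega)\hookrightarrow\hookrightarrow H^{-1}(\Omega)$, the Aubin--Lions--Simon theorem (the $L^\infty/C$ version for $H^1$-in-time bounds) yields \eqref{eq:cv-vhat-strong}. Next, $\bar v_\tau$ is bounded in $L^\infty(0,T;L^2)$ by \eqref{eq:uniform-bounds-discrete}, so it has a weak-$*$ limit $\bar v$. To identify $\bar v=v$, note that
\[
\|\widehat v_\tau-\bar v_\tau\|_{L^2(0,T;L^2)}\le\tau\|\nabla\bar h_\tau\|_{L^2L^2}\le C\tau,
\]
by the identity used in \eqref{eq:dal-difference}. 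This gives \eqref{eq:cv-vbar-star} and \eqref{eq:cv-vbar}. Lower semicontinuity then places $v$ in $L^\infty(0,T;L^2)$.

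\textbf{Mobility variable.} Apply Lemma~\ref{lem:discrete-aubin-lions} with $w^n=\nu(h^n)$ and the triple $X_0=H_0^1(\Omega)$, $X=L^2(\Omega)$, $X_1=H^{-1}(\Omega)$. The hypotheses are supplied by Lemma~\ref{lem:nu-spatial-bounds}, Lemma~\ref{lem:time-derivative}, and $\|\nu(h_0)\|_{H_0^1}\le\Lambda\|h_0\|_{H_0^1}$. We obtain $\nu(\bar h_\tau)\to w$ strongly in $L^2(0,T;L^2)$ with $w\in L^2(0,T;H_0^1)$. The same lemma shows that $\widehat\nu_\tau\to w$ strongly in $L^2(0,T;L^2)$. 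Rather than only in $H^{-1}$, this follows because $\widehat\nu_\tau$ is bounded in $L^2H_0^1\cap H^1H^{-1}$ and its limit was identified with that of $\bar w_\tau$ in the proof of Lemma~\ref{lem:discrete-aubin-lions}. The weak limits in \eqref{eq:cv-nuhat-weak}--\eqref{eq:cv-nuhat-time} exist by boundedness and coincide with $w$ by uniqueness of distributional limits.

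\textbf{Enthalpy.} Set $h:=\nu^{-1}(w)$. Since $\nu^{-1}$ is Lipschitz with constant $1/\delta$ by \eqref{eq:nu-bounds},
\[
\|\bar h_\tau-h\|_{L^2L^2}\le\delta^{-1}\|\nu(\bar h_\tau)-w\|_{L^2L^2}\to0.
\]
This is \eqref{eq:cv-hbar-strong}, and it identifies $w=\nu(h)$, which gives \eqref{eq:cv-nuhat-strong} and \eqref{eq:cv-nubar-strong}. The bounds of \eqref{eq:uniform-bounds-discrete} yield weak-$*$ compactness in $L^\infty(0,T;L^2)$ and weak compactness in $L^2(0,T;H_0^1)$. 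Every such limit equals $h$ because of the strong $L^2L^2$ convergence. This proves \eqref{eq:cv-hbar-star} and \eqref{eq:cv-hbar-weak}, and places $h$ in the asserted spaces.

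\textbf{Main obstacle.} The delicate point is the consistent identification of all limits along one common subsequence. In particular, $\widehat\nu_\tau$ converges strongly in $L^2L^2$, and not merely in $H^{-1}$. Here I would use \eqref{eq:dal-difference}, which makes $\widehat\nu_\tau-\nu(\bar h_\tau)=O(\tau)$ in $L^2H^{-1}$. I would combine it with the uniform $L^2H_0^1$ bound on both interpolants and the interpolation inequality
\[
\|\cdot\|_{L^2}^2\le\|\cdot\|_{H^{-1}}\|\cdot\|_{H_0^1},
\]
which gives $\widehat\nu_\tau-\nu(\bar h_\tau)=O(\tau^{1/2})$ in $L^1(0,T;L^2)$, and in $L^2(0,T;L^2)$ after applying Cauchy--Schwarz in time. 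This transfers the strong convergence directly.
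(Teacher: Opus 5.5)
Your proposal is correct and follows essentially the paper's argument: weak(-$*$) extraction from the uniform bounds, Lemma~\ref{lem:discrete-aubin-lions} applied to $\nu(\bar h_\tau)$ with $H_0^1\hookrightarrow\hookrightarrow L^2\hookrightarrow H^{-1}$, transfer of strong convergence to $\bar h_\tau$ through the $\delta^{-1}$-Lipschitz map $\nu^{-1}$, and the $O(\tau)$ comparison of $\bar v_\tau$ with $\widehat v_\tau$ (your use of Simon's $C([0,T];H^{-1})$ compactness is just a packaged form of the paper's Arzel\`a--Ascoli step). The one real difference is how you obtain $\widehat\nu_\tau\to\nu(h)$ strongly in $L^2(0,T;L^2(\Omega))$: the paper applies Aubin--Lions--Simon to $\widehat\nu_\tau$ separately and matches the two limits distributionally via \eqref{eq:dal-difference}, whereas your interpolation bound $\|u\|_{L^2}^2\le\|u\|_{H^{-1}}\|u\|_{H_0^1}$ yields $\|\widehat\nu_\tau-\nu(\bar h_\tau)\|_{L^2(0,T;L^2(\Omega))}^2\le C\tau$ directly by Cauchy--Schwarz in time (the detour through $L^1(0,T;L^2)$ is not needed), which is more economical and makes your reliance on the proof, rather than the statement, of Lemma~\ref{lem:discrete-aubin-lions} unnecessary.
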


\begin{proof}
By Lemmas~\ref{lem:discrete-energy}, \ref{lem:time-derivative}, \ref{lem:v-time-reg}, and
\ref{lem:nu-spatial-bounds}, the families
\[
\begin{aligned}
\widehat v_\tau &\subset H^1(0,T;L^2(\Omega)^d)\cap L^\infty(0,T;L^2(\Omega)^d),
\qquad
&&\bar v_\tau \subset L^\infty(0,T;L^2(\Omega)^d),
\\
\widehat\nu_\tau &\subset H^1(0,T;H^{-1}(\Omega))\cap L^2(0,T;H_0^1(\Omega)),
\qquad
&&\bar h_\tau \subset L^\infty(0,T;L^2(\Omega))\cap L^2(0,T;H_0^1(\Omega)),
\end{aligned}
\]
are bounded. Hence, by Banach--Alaoglu and reflexivity, after extraction of a subsequence there
exist functions
\[
v\in H^1(0,T;L^2(\Omega)^d)\cap L^\infty(0,T;L^2(\Omega)^d),
\qquad
w\in H^1(0,T;H^{-1}(\Omega))\cap L^2(0,T;H_0^1(\Omega)),
\]
and
\[
h\in L^\infty(0,T;L^2(\Omega))\cap L^2(0,T;H_0^1(\Omega))
\]
\begin{revision}
such that \eqref{eq:cv-vhat}, \eqref{eq:cv-vbar-star}, \eqref{eq:cv-hbar-star}, and
\eqref{eq:cv-hbar-weak} hold, while
\[
\widehat\nu_\tau\rightharpoonup w
\quad\text{weakly in }L^2(0,T;H_0^1(\Omega))
\quad\text{and in }H^1(0,T;H^{-1}(\Omega)).
\]
At this stage the identity $w=\nu(h)$ has not yet been used; it is established below.
\end{revision}
Since
\[
\widehat\nu_\tau \subset H^1(0,T;H^{-1}(\Omega))\cap L^2(0,T;H_0^1(\Omega))
\]
is bounded, the Aubin--Lions--Simon compactness theorem \cite{Simon1987} yields, after passing to a subsequence,
\[
\widehat\nu_\tau \to w
\qquad\text{strongly in }L^2(0,T;L^2(\Omega)).
\]
On the other hand, Lemma~\ref{lem:discrete-aubin-lions} applied to the nodal values
$w^n:=\nu(h^n)$ with
\[
X_0=H_0^1(\Omega),\qquad X=L^2(\Omega),\qquad X_1=H^{-1}(\Omega)
\]
and with $w^0=\nu(h^0)=\nu(h_0)\in H_0^1(\Omega)$, which is fixed independently of $\tau$,
shows that, after extraction of a further subsequence,
\[
\nu(\bar h_\tau)\to \widetilde w
\qquad\text{strongly in }L^2(0,T;L^2(\Omega))
\]
for some $\widetilde w\in L^2(0,T;H_0^1(\Omega))$.
We claim that $\widetilde w=w$. Indeed, by \eqref{eq:dal-difference},
\[
\|\widehat\nu_\tau-\nu(\bar h_\tau)\|_{L^2(0,T;H^{-1}(\Omega))}
\le
\tau\|\partial_t\widehat\nu_\tau\|_{L^2(0,T;H^{-1}(\Omega))}
\to 0.
\]
Hence $\widehat\nu_\tau$ and $\nu(\bar h_\tau)$ have the same limit in distributions, so
$\widetilde w=w$.

\begin{revision}
Set $\widetilde h:=\nu^{-1}(w)$. The global Lipschitz continuity of $\nu^{-1}$ and the strong
convergence of $\nu(\bar h_\tau)$ give
\[
\bar h_\tau=\nu^{-1}(\nu(\bar h_\tau))\to\widetilde h
\qquad\text{strongly in }L^2(0,T;L^2(\Omega)).
\]
On the other hand, \eqref{eq:cv-hbar-weak} implies weak convergence of $\bar h_\tau$ to the
previously extracted function $h$ in $L^2(0,T;L^2(\Omega))$. Uniqueness of the weak limit therefore
gives $\widetilde h=h$, and hence $w=\nu(h)$. Together with the preceding convergences, this
proves \eqref{eq:cv-nuhat-weak}--\eqref{eq:cv-nuhat-strong}, \eqref{eq:cv-nubar-strong}, and
\eqref{eq:cv-hbar-strong}.

For completeness, the weak-* identification in \eqref{eq:cv-hbar-star} uses the uniform
$L^\infty(0,T;L^2(\Omega))$ bound in addition to strong $L^2$ convergence. Given
$\phi\in L^1(0,T;L^2(\Omega))$, choose $\phi_k\in L^2(0,T;L^2(\Omega))$ with
$\phi_k\to\phi$ in $L^1(0,T;L^2(\Omega))$. Then
\[
\left|\int_0^T(\bar h_\tau-h,\phi-\phi_k)\,dt\right|
\le
\bigl(\|\bar h_\tau\|_{L^\infty L^2}+\|h\|_{L^\infty L^2}\bigr)
\|\phi-\phi_k\|_{L^1L^2}.
\]
For fixed $k$, the integral against $\phi_k$ tends to zero by strong $L^2$ convergence; letting
$k\to\infty$ proves convergence against every $L^1(0,T;L^2(\Omega))$ test and hence the asserted
weak-* limit.
\end{revision}

It remains to identify the limit of $\bar v_\tau$. From the definition of the affine interpolant,
for $t\in(t^{n-1},t^n]$ one has
\[
\bar v_\tau(t)-\widehat v_\tau(t)
=
\frac{t^n-t}{\tau}(v^n-v^{n-1})
=
(t^n-t)\,\partial_t \widehat v_\tau(t).
\]
Hence
\[
\norm{\bar v_\tau-\widehat v_\tau}_{L^2(0,T;L^2(\Omega)^d)}^2
\le
\tau^2 \norm{\partial_t \widehat v_\tau}_{L^2(0,T;L^2(\Omega)^d)}^2
\le
C\tau^2,
\]
and therefore
\[
\bar v_\tau-\widehat v_\tau \to 0
\qquad\text{strongly in }L^2(0,T;L^2(\Omega)^d).
\]
Combined with \eqref{eq:cv-vhat}, this proves \eqref{eq:cv-vbar}.
\begin{revision}
The weak-* limit in \eqref{eq:cv-vbar-star} is also $v$: approximate an arbitrary test in
$L^1(0,T;L^2(\Omega)^d)$ by $L^2(0,T;L^2(\Omega)^d)$ tests and use the uniform
$L^\infty(0,T;L^2(\Omega)^d)$ bound exactly as in the preceding argument.
\end{revision}

Finally, \eqref{eq:cv-vhat-strong} follows from an Arzelà--Ascoli argument. Indeed, the sequence
$(\widehat v_\tau)_\tau$ is bounded in $C([0,T];L^2(\Omega)^d)$ by the continuous embedding
$H^1(0,T;L^2(\Omega)^d)\hookrightarrow C([0,T];L^2(\Omega)^d)$, and hence bounded in
$C([0,T];H^{-1}(\Omega)^d)$. Moreover,
\[
\norm{\widehat v_\tau(t)-\widehat v_\tau(s)}_{H^{-1}(\Omega)^d}
\le
\norm{\widehat v_\tau(t)-\widehat v_\tau(s)}_{L^2(\Omega)^d}
\le
|t-s|^{1/2}\norm{\partial_t\widehat v_\tau}_{L^2(0,T;L^2(\Omega)^d)},
\]
so the family is equicontinuous in $H^{-1}(\Omega)^d$. Since the embedding
$L^2(\Omega)^d\hookrightarrow H^{-1}(\Omega)^d$ is compact, pointwise relative compactness holds,
\begin{revision}
and Arzelà--Ascoli therefore yields, after extraction of a further subsequence, strong convergence in
$C([0,T];H^{-1}(\Omega)^d)$ to some limit $\widetilde v$. The weak convergence
\eqref{eq:cv-vhat} shows that the same sequence converges to $v$ in distributions; hence
$\widetilde v=v$. This proves \eqref{eq:cv-vhat-strong}.
\end{revision}
\end{proof}

\subsection{Passage to the limit}

\begin{theorem}[Existence for the cutoff system: the case $h_0\in H_0^1(\Omega)$]
\label{thm:existence-cutoff-H1}
Let $\Omega\subset\R^d$, $d\le3$, be a bounded Lipschitz domain, let $T>0$, and let $a,b>0$\rev{.}
\rev{Fix $M\in(0,1/a)$ in the cutoff definitions above.}
For every initial data
\[
v_0\in L^2(\Omega)^d,
\qquad
h_0\in H_0^1(\Omega),
\]
there exists a weak solution $(v,h)$ of the cutoff Westervelt system
\eqref{eq:cutoff-westervelt} in the sense of Definition~\ref{def:weak-solution}. Moreover,
\begin{equation}\label{eq:apriori-limit-H1}
\sup_{t\in[0,T]} \norm{v(t)}_{L^2(\Omega)}^2
+ \sup_{t\in[0,T]} \norm{h(t)}_{L^2(\Omega)}^2
+ \norm{\nabla h}_{L^2(0,T;L^2(\Omega))}^2
\le C\Bigl(\norm{v_0}_{L^2(\Omega)}^2 + \norm{h_0}_{L^2(\Omega)}^2\Bigr).
\end{equation}
\rev{The constant $C$ may depend on $\Omega,T,a,b,$ and $M$ through $\delta$ and $\Lambda$, but it is independent of the Rothe time step.}
\end{theorem}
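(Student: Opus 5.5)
We construct the Rothe iterates $(v^n,h^n)_{n=0}^N$ for $\tau=T/N$; they are well defined by the one-step existence proposition. We then take the subsequence and the limit $(v,h)$ provided by Lemma~\ref{lem:compactness}. The regularity requirements of Definition~\ref{def:weak-solution} are immediate from that lemma. Indeed, $v\in H^1(0,T;L^2(\Omega)^d)\cap L^\infty(0,T;L^2(\Omega)^d)$ and $h\in L^\infty(0,T;L^2(\Omega))\cap L^2(0,T;H_0^1(\Omega))$. Moreover, $\nu(h)$ is the weak limit of $\widehat\nu_\tau$ in $L^2(0,T;H_0^1(\Omega))\cap H^1(0,T;H^{-1}(\Omega))$, so it lies in that space.

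To pass to the limit in the equations, we rewrite the scheme \eqref{eq:rothe-system} in terms of the interpolants. This gives $\partial_t\widehat v_\tau+\nabla\bar h_\tau=0$ a.e., and, in the form \eqref{eq:rothe-scalar}, for all $\xi\in H_0^1(\Omega)$ and a.e.\ $t$,
\[
\dual{\dt\widehat\nu_\tau(t)}{\xi}-\ip{\bar v_\tau^-(t)}{\nabla\xi}+(b+\tau)\ip{\nabla\bar h_\tau(t)}{\nabla\xi}=0 .
\]
We multiply by $\eta\in C_c^\infty(0,T)$ and integrate in time. All terms are linear in the interpolants, so weak convergences suffice:
\begin{itemize}
\item \eqref{eq:cv-vhat} and \eqref{eq:cv-hbar-weak} handle the first equation;
\item \eqref{eq:cv-nuhat-time} handles $\dt\widehat\nu_\tau$;
\item \eqref{eq:cv-hbar-weak} handles the diffusion term, while the extra $\tau\ip{\nabla\bar h_\tau}{\nabla\xi}$ is $O(\tau^{1/2})$ by \eqref{eq:uniform-bounds-discrete}.
\end{itemize}
The only new ingredient is $\bar v_\tau^-$. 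We note that $\bar v_\tau^--\bar v_\tau$ equals $v^{n-1}-v^n$ on each interval. By \eqref{eq:disc-energy},
\[
\norm{\bar v_\tau^--\bar v_\tau}_{L^2(0,T;L^2)}^2=\tau\sum_n\norm{v^n-v^{n-1}}^2\le C\tau .
\]
Hence $\bar v_\tau^-\rightharpoonup v$ weakly in $L^2$ by \eqref{eq:cv-vbar}. Since $\eta$ is arbitrary and $H_0^1(\Omega)$ is separable, we obtain \eqref{eq:weak-v}--\eqref{eq:weak-h} for a.e.\ $t$ and all test functions, using a countable dense set of $\xi$ and $\varphi$.

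For the initial conditions, we have $\widehat v_\tau(0)=v_0$, and \eqref{eq:cv-vhat-strong} gives convergence in $C([0,T];H^{-1})$, so $v(0)=v_0$. For $\nu(h)$, we use that $\widehat\nu_\tau(0)=\nu(h_0)$ and that weak convergence in $H^1(0,T;H^{-1}(\Omega))$ implies weak convergence of point evaluations at $t=0$ in $H^{-1}$, since evaluation at $t=0$ is a continuous linear map. Hence $\nu(h)(0)=\nu(h_0)$ in $H^{-1}$, and by Remark~\ref{rem:time-continuity-nu} this is equivalent to $h(0)=h_0$.

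The a priori bound \eqref{eq:apriori-limit-H1} follows from the discrete energy inequality together with weak lower semicontinuity. Using \eqref{eq:E-bounds}, the right-hand side of \eqref{eq:disc-energy} is bounded by $\max(\Lambda,1)(\norm{h_0}^2+\norm{v_0}^2)/2$. The left-hand side controls $\frac{\delta}{2}\norm{\bar h_\tau}_{L^\infty L^2}^2+\frac12\norm{\bar v_\tau}_{L^\infty L^2}^2+b\norm{\nabla\bar h_\tau}_{L^2L^2}^2$. Weak-* lower semicontinuity in $L^\infty(0,T;L^2)$ and weak lower semicontinuity in $L^2(0,T;L^2)$ then give the essential-sup and gradient bounds for the limit. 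The essential supremum becomes a true supremum because $v,h\in C([0,T];L^2(\Omega))$; for $h$ this follows from Remark~\ref{rem:time-continuity-nu}. The constant is $C=\max(\Lambda,1)\max(\delta^{-1},1,b^{-1})$, which is independent of $\tau$ and of $\norm{h_0}_{H^1}$.

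The main obstacle is the bookkeeping with the shifted interpolant $\bar v_\tau^-$ and the identification of the initial datum of $\nu(h)$. Both are handled by the dissipation term $\frac12\sum_n\norm{v^n-v^{n-1}}^2$ in \eqref{eq:disc-energy} and by the continuity of the trace map on $H^1(0,T;H^{-1}(\Omega))$.
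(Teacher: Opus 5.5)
Your proposal is correct and follows essentially the paper's proof: the compactness lemma, a purely linear passage to the limit, continuity of the time trace for the initial data, and weak(-*) lower semicontinuity for \eqref{eq:apriori-limit-H1}. The one deviation is that you pass to the limit in the scalar form \eqref{eq:rothe-scalar}, which forces you to handle $\bar v_\tau^-$ and the $\tau$-stabilization, whereas the paper uses the equivalent system form \eqref{eq:rothe-system} with $\bar v_\tau$ and $b$ directly, so your ``main obstacle'' never arises there; also, since the suprema of the $h$- and $v$-terms may be attained at different indices, your explicit constant needs an extra factor (e.g.\ $3$), which does not affect the theorem.
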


\begin{remark}[On uniqueness]
Theorem~\ref{thm:existence-cutoff-H1} provides a low-order existence result for the cutoff problem.
Full weak uniqueness is not a direct consequence of the energy estimate. A weak--strong uniqueness
principle is proved in Section~\ref{sec:weak-strong-uniqueness}; it applies whenever one solution
has enough time regularity to serve as a reference state.
\end{remark}

\begin{proof}
By Lemma~\ref{lem:compactness}, after extraction of a subsequence there exist
\[
v\in H^1(0,T;L^2(\Omega)^d)\cap L^\infty(0,T;L^2(\Omega)^d),
\qquad
h\in L^\infty(0,T;L^2(\Omega))\cap L^2(0,T;H_0^1(\Omega)),
\]
such that the convergences \eqref{eq:cv-vhat}--\eqref{eq:cv-hbar-strong} hold.

\smallskip
\noindent
\textbf{Step 1: Passage to the limit in the first equation.}
For almost every $t\in(0,T)$, the first discrete relation in \eqref{eq:rothe-system} can be written as
\[
\ip{\partial_t\widehat v_\tau(t)}{\varphi}
+
\ip{\nabla \bar h_\tau(t)}{\varphi}
=0
\qquad\forall\,\varphi\in L^2(\Omega)^d.
\]
Integrating over $(0,T)$ against an arbitrary test function
$\zeta\in L^2(0,T;L^2(\Omega)^d)$ yields
\[
\int_0^T \ip{\partial_t\widehat v_\tau(t)}{\zeta(t)}\,\ds
+
\int_0^T \ip{\nabla \bar h_\tau(t)}{\zeta(t)}\,\ds
=0.
\]
Passing to the limit by \eqref{eq:cv-vhat} and \eqref{eq:cv-hbar-weak}, we obtain
\[
\int_0^T \ip{\partial_t v(t)}{\zeta(t)}\,\ds
+
\int_0^T \ip{\nabla h(t)}{\zeta(t)}\,\ds
=0
\qquad\forall\,\zeta\in L^2(0,T;L^2(\Omega)^d).
\]
Hence
\[
\partial_t v + \nabla h =0
\qquad\text{in }L^2(0,T;L^2(\Omega)^d),
\]
which is equivalent to \eqref{eq:weak-v}.

\smallskip
\noindent
\textbf{Step 2: Passage to the limit in the second equation.}
For almost every $t\in(0,T)$, the second discrete relation in \eqref{eq:rothe-system} reads
\[
\dual{\partial_t\widehat\nu_\tau(t)}{\xi}
-
\ip{\bar v_\tau(t)}{\nabla\xi}
+
b\ip{\nabla\bar h_\tau(t)}{\nabla\xi}
=0
\qquad\forall\,\xi\in H_0^1(\Omega).
\]
Let $\eta\in L^2(0,T;H_0^1(\Omega))$ be arbitrary. Integrating in time gives
\[
\int_0^T \dual{\partial_t\widehat\nu_\tau(t)}{\eta(t)}\,\ds
-
\int_0^T \ip{\bar v_\tau(t)}{\nabla\eta(t)}\,\ds
+
b\int_0^T \ip{\nabla\bar h_\tau(t)}{\nabla\eta(t)}\,\ds
=0.
\]
Passing to the limit by \eqref{eq:cv-nuhat-time}, \eqref{eq:cv-vbar}, and \eqref{eq:cv-hbar-weak},
we arrive at
\[
\int_0^T \dual{\partial_t\nu(h)(t)}{\eta(t)}\,\ds
-
\int_0^T \ip{v(t)}{\nabla\eta(t)}\,\ds
+
b\int_0^T \ip{\nabla h(t)}{\nabla\eta(t)}\,\ds
=0
\qquad\forall\,\eta\in L^2(0,T;H_0^1(\Omega)).
\]
Thus
\[
\partial_t \nu(h) + \diver v = b\Delta h
\qquad\text{in }L^2(0,T;H^{-1}(\Omega)),
\]
which is equivalent to \eqref{eq:weak-h}.

\smallskip
\noindent
\textbf{Step 3: Initial conditions.}
Since the evaluation map
\(
w\mapsto w(0)
\)
is continuous from $H^1(0,T;L^2(\Omega)^d)$ to $L^2(\Omega)^d$, the weak convergence
\eqref{eq:cv-vhat} implies
\[
\widehat v_\tau(0)\rightharpoonup v(0)
\qquad\text{weakly in }L^2(\Omega)^d.
\]
Because $\widehat v_\tau(0)=v_0$ for every $\tau$, it follows that
\[
v(0)=v_0
\qquad\text{in }L^2(\Omega)^d.
\]

Likewise, the evaluation map is continuous from
$H^1(0,T;H^{-1}(\Omega))$ to $H^{-1}(\Omega)$. Hence
\eqref{eq:cv-nuhat-time} yields
\[
\widehat\nu_\tau(0)\rightharpoonup \nu(h)(0)
\qquad\text{weakly in }H^{-1}(\Omega).
\]
Since $\widehat\nu_\tau(0)=\nu(h_0)$ for every $\tau$, we conclude that
\[
\nu(h)(0)=\nu(h_0)
\qquad\text{in }H^{-1}(\Omega).
\]

\smallskip
\noindent
\textbf{Step 4: A priori estimate.}
By \eqref{eq:cv-vbar-star}, \eqref{eq:cv-hbar-star}, and \eqref{eq:cv-hbar-weak}, together with the
uniform bounds \eqref{eq:uniform-bounds-discrete}, weak lower semicontinuity yields
\[
\norm{v}_{L^\infty(0,T;L^2(\Omega)^d)}^2
+
\norm{h}_{L^\infty(0,T;L^2(\Omega))}^2
+
\norm{\nabla h}_{L^2(0,T;L^2(\Omega))}^2
\le
C\Bigl(\norm{v_0}_{L^2(\Omega)}^2+\norm{h_0}_{L^2(\Omega)}^2\Bigr).
\]
This estimate is first obtained with the essential supremum in time. Since
$v\in H^1(0,T;L^2(\Omega)^d)\hookrightarrow C([0,T];L^2(\Omega)^d)$ and, by
Remark~\ref{rem:time-continuity-nu}, $h\in C([0,T];L^2(\Omega))$, the essential supremum agrees
with the supremum over these continuous representatives. Thus the estimate may be written as
\eqref{eq:apriori-limit-H1}. We have thus shown that $(v,h)$ is a weak solution
of \eqref{eq:cutoff-westervelt} in the sense of Definition~\ref{def:weak-solution}.
\end{proof}

\begin{theorem}[Existence for the cutoff system: the case $h_0\in L^2(\Omega)$]
\label{thm:existence-cutoff}
Let $\Omega\subset\R^d$, $d\le3$, be a bounded Lipschitz domain, let $T>0$, and let $a,b>0$\rev{.}
\rev{Fix $M\in(0,1/a)$ in the cutoff definitions above.}
For every initial data
\[
v_0\in L^2(\Omega)^d,
\qquad
h_0\in L^2(\Omega),
\]
there exists a weak solution $(v,h)$ of the cutoff Westervelt system \eqref{eq:cutoff-westervelt}
in the sense of Definition~\ref{def:weak-solution}. Moreover,
\begin{equation}\label{eq:apriori-limit-L2}
\sup_{t\in[0,T]} \norm{v(t)}_{L^2(\Omega)}^2
+ \sup_{t\in[0,T]} \norm{h(t)}_{L^2(\Omega)}^2
+ \norm{\nabla h}_{L^2(0,T;L^2(\Omega))}^2
\le C\Bigl(\norm{v_0}_{L^2(\Omega)}^2 + \norm{h_0}_{L^2(\Omega)}^2\Bigr).
\end{equation}
\rev{The constant $C$ may depend on $\Omega,T,a,b,$ and $M$ through $\delta$ and $\Lambda$, but it is independent of the approximating index used below.}
\end{theorem}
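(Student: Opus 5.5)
We approximate the initial data at the level of continuous weak solutions, as announced in the introduction. Choose $h_{0,k}\in H_0^1(\Omega)$ with $h_{0,k}\to h_0$ in $L^2(\Omega)$; density of $H_0^1(\Omega)$ in $L^2(\Omega)$ makes this possible. Theorem~\ref{thm:existence-cutoff-H1} then gives weak solutions $(v_k,h_k)$ with data $(v_0,h_{0,k})$. The a priori bound \eqref{eq:apriori-limit-H1} depends only on $\norm{v_0}^2+\norm{h_{0,k}}^2$, which is bounded uniformly in $k$. Hence $(v_k)$ is bounded in $L^\infty(0,T;L^2)$ and $(h_k)$ in $L^\infty(0,T;L^2)\cap L^2(0,T;H_0^1)$. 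The constant in \eqref{eq:apriori-limit-H1} does not involve $\norm{h_0}_{H^1}$: that norm entered only Lemma~\ref{lem:nu-spatial-bounds}, which fed into compactness but not into the energy estimate. So the bound is uniform in $k$.

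\textbf{Uniform bounds.} From \eqref{eq:weak-v} we get $\dt v_k=-\nabla h_k$, bounded in $L^2(0,T;L^2)$, so $v_k$ is bounded in $H^1(0,T;L^2)$. From \eqref{eq:weak-h} we get $\norm{\dt\nu(h_k)}_{H^{-1}}\le\norm{v_k}+b\norm{\nabla h_k}$, bounded in $L^2(0,T)$. The chain rule gives $\nabla\nu(h_k)=m(h_k)\nabla h_k$, so $\nu(h_k)$ is bounded in $L^2(0,T;H_0^1)$, now without any dependence on $\norm{h_{0,k}}_{H^1}$. This is the key point: the $H^1$-datum term in Lemma~\ref{lem:nu-spatial-bounds} arose only from the discrete node $n=0$, and it disappears at the continuous level.

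\textbf{Compactness and identification.} Aubin--Lions--Simon applied to $\nu(h_k)$ in $L^2(0,T;H_0^1)\cap H^1(0,T;H^{-1})$ gives strong convergence in $L^2(0,T;L^2)$ to some $w$. Since $\nu^{-1}$ is Lipschitz, $h_k\to\nu^{-1}(w)$ strongly in $L^2L^2$. Comparing with the weak limit $h$ of $h_k$ gives $\nu^{-1}(w)=h$, i.e.\ $w=\nu(h)$, exactly as in Lemma~\ref{lem:compactness}. We also extract $v_k\rightharpoonup v$ in $H^1(0,T;L^2)$ and weak-* in $L^\infty L^2$. The equations \eqref{eq:weak-v}--\eqref{eq:weak-h} are linear in $(\dt v,\nabla h,v,\dt\nu(h))$, so we pass to the limit after integrating against test functions in $L^2(0,T;L^2)$ and $L^2(0,T;H_0^1)$.

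\textbf{Initial data and a priori bound.} We use weak continuity of the trace at $t=0$ from $H^1(0,T;X)$ into $X$. This gives $v(0)=v_0$. It also gives $\nu(h_k)(0)=\nu(h_{0,k})\rightharpoonup\nu(h)(0)$ in $H^{-1}$. Since $\nu$ is Lipschitz, $\nu(h_{0,k})\to\nu(h_0)$ in $L^2$, and therefore $\nu(h)(0)=\nu(h_0)$. The estimate \eqref{eq:apriori-limit-L2} follows by weak(-*) lower semicontinuity from the uniform bounds, with $\norm{h_{0,k}}\to\norm{h_0}$. The essential supremum is replaced by the supremum using the continuity from Remark~\ref{rem:time-continuity-nu}.

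\textbf{Main obstacle.} The delicate step is checking that every bound used is genuinely independent of $\norm{h_{0,k}}_{H^1}$. In particular, the $L^2(0,T;H_0^1)$ bound for $\nu(h_k)$ must come from the limit estimate on $\nabla h_k$ and not from Lemma~\ref{lem:nu-spatial-bounds}.
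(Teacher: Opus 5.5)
Your proposal is correct and follows the paper's proof essentially step for step. Both arguments approximate $h_0$ by $H_0^1$ data and use the a priori bound of Theorem~\ref{thm:existence-cutoff-H1}, which is independent of $\|h_{0,k}\|_{H^1}$. Both bound $\nu(h_k)$ in $L^2(0,T;H_0^1(\Omega))\cap H^1(0,T;H^{-1}(\Omega))$ via the chain rule and the weak formulation, apply Aubin--Lions--Simon, and identify the limit through the Lipschitz map $\nu^{-1}$. They then pass to the limit in the linear equations and recover the initial data by weak continuity of the trace.
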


\begin{remark}[\rev{Scope of the $L^2$-data construction}]
\begin{revision}
Theorem~\ref{thm:existence-cutoff} proves existence for general $L^2$ initial enthalpy by
approximating the initial datum with $H_0^1$ data and then passing to a limit in the corresponding
continuous weak solutions. The compactness argument above establishes direct convergence of the
Rothe interpolants only when the initial enthalpy belongs to $H_0^1(\Omega)$. No claim is made here
that the Rothe sequence initialized directly with an arbitrary $h_0\in L^2(\Omega)$ converges.
\end{revision}
\end{remark}

\begin{proof}
Choose a sequence $(h_0^m)_{m\in\N}\subset H_0^1(\Omega)$ such that
\[
h_0^m\to h_0
\qquad\text{strongly in }L^2(\Omega).
\]
Since $\nu$ is globally Lipschitz, we also have
\[
\nu(h_0^m)\to \nu(h_0)
\qquad\text{strongly in }L^2(\Omega)\hookrightarrow H^{-1}(\Omega).
\]

For each $m\in\N$, Theorem~\ref{thm:existence-cutoff-H1} yields a weak solution
$(v_m,h_m)$ corresponding to the initial data $(v_0,h_0^m)$. Moreover,
\begin{equation}\label{eq:apriori-m}
\sup_{t\in[0,T]} \norm{v_m(t)}_{L^2(\Omega)}^2
+ \sup_{t\in[0,T]} \norm{h_m(t)}_{L^2(\Omega)}^2
+ \norm{\nabla h_m}_{L^2(0,T;L^2(\Omega))}^2
\le C\Bigl(\norm{v_0}_{L^2(\Omega)}^2 + \norm{h_0^m}_{L^2(\Omega)}^2\Bigr).
\end{equation}
Since $(h_0^m)$ is bounded in $L^2(\Omega)$, the right-hand side is bounded uniformly in $m$.

We next derive a uniform bound for $\partial_t \nu(h_m)$ in $L^2(0,T;H^{-1}(\Omega))$.
Indeed, from the weak formulation \eqref{eq:weak-h} for $(v_m,h_m)$ we obtain for a.e.\ $t\in(0,T)$
and every $\xi\in H_0^1(\Omega)$,
\[
\dual{\partial_t\nu(h_m)(t)}{\xi}
=
\ip{v_m(t)}{\nabla\xi}
-
b\ip{\nabla h_m(t)}{\nabla\xi}.
\]
Hence
\[
\norm{\partial_t\nu(h_m)(t)}_{H^{-1}(\Omega)}
\le
\norm{v_m(t)}_{L^2(\Omega)^d}
+
b\norm{\nabla h_m(t)}_{L^2(\Omega)},
\]
and therefore, by \eqref{eq:apriori-m},
\begin{equation}\label{eq:dtnu-m}
\norm{\partial_t\nu(h_m)}_{L^2(0,T;H^{-1}(\Omega))}\le C.
\end{equation}

Since $\nu$ is globally Lipschitz and $\nu(0)=0$, the composition $\nu(h_m)$ belongs to
$L^2(0,T;H_0^1(\Omega))$ and satisfies, by the Sobolev chain rule and
$\nabla\nu(h_m)=m(h_m)\nabla h_m$ together with \eqref{eq:mobility-bounds},
\[
\norm{\nabla\nu(h_m)}_{L^2(0,T;L^2(\Omega))}
\le \Lambda \norm{\nabla h_m}_{L^2(0,T;L^2(\Omega))}\le C
\]
uniformly in $m$. This bound uses only the uniform estimate \eqref{eq:apriori-m} on
$\nabla h_m$ and is independent of the $H_0^1$-norm of the approximating data $h_0^m$.
Together with \eqref{eq:dtnu-m}, it shows that $(\nu(h_m))_{m\in\N}$ is bounded in
\[
L^2(0,T;H_0^1(\Omega))\cap H^1(0,T;H^{-1}(\Omega)).
\]
Hence, after extraction of a subsequence, there exists
\[
w\in L^2(0,T;H_0^1(\Omega))\cap H^1(0,T;H^{-1}(\Omega))
\]
such that
\[
\nu(h_m)\rightharpoonup w
\quad\text{weakly in }L^2(0,T;H_0^1(\Omega)),
\qquad
\nu(h_m)\rightharpoonup w
\quad\text{weakly in }H^1(0,T;H^{-1}(\Omega)).
\]
By the Aubin--Lions--Simon compactness theorem \cite{Simon1987}, after extraction of a further subsequence,
\begin{equation}\label{eq:nuhm-strong}
\nu(h_m)\to w
\qquad\text{strongly in }L^2(0,T;L^2(\Omega)).
\end{equation}

The first equation in the weak formulation for $(v_m,h_m)$ gives
\[
\partial_t v_m=-\nabla h_m
\qquad\text{in }L^2(0,T;L^2(\Omega)^d).
\]
Together with \eqref{eq:apriori-m}, this yields
\[
\|v_m\|_{H^1(0,T;L^2(\Omega)^d)}\le C
\]
uniformly in $m$. Therefore, after extracting a further subsequence,
\[
v_m \stackrel{*}{\rightharpoonup} v
\quad\text{in }L^\infty(0,T;L^2(\Omega)^d),
\qquad
v_m \rightharpoonup v
\quad\text{in }H^1(0,T;L^2(\Omega)^d),
\]
and, by \eqref{eq:apriori-m},
\[
h_m \stackrel{*}{\rightharpoonup} h
\quad\text{in }L^\infty(0,T;L^2(\Omega)),
\qquad
h_m \rightharpoonup h
\quad\text{in }L^2(0,T;H_0^1(\Omega)).
\]
Since $\nu^{-1}$ is globally Lipschitz by \eqref{eq:nu-bounds}, the strong convergence
\eqref{eq:nuhm-strong} implies
\[
h_m=\nu^{-1}(\nu(h_m))\to \nu^{-1}(w)
\qquad\text{strongly in }L^2(0,T;L^2(\Omega)).
\]
The weak limit of $h_m$ in $L^2(0,T;H_0^1(\Omega))$ is therefore
$h=\nu^{-1}(w)$, and consequently $w=\nu(h)$. Thus
\[
\nu(h_m)\rightharpoonup \nu(h)
\quad\text{weakly in }L^2(0,T;H_0^1(\Omega)),
\qquad
\nu(h_m)\rightharpoonup \nu(h)
\quad\text{weakly in }H^1(0,T;H^{-1}(\Omega)).
\]
This also shows that the limiting solution satisfies the mobility-variable regularity required in
Definition~\ref{def:weak-solution}.

We now pass to the limit in the weak formulations for $(v_m,h_m)$.
The first relation \eqref{eq:weak-v} passes to the limit by weak convergence of
$\partial_t v_m$ in $L^2(0,T;L^2(\Omega)^d)$ and weak convergence of $\nabla h_m$ in
$L^2(0,T;L^2(\Omega)^d)$.
The second relation \eqref{eq:weak-h} passes to the limit by weak convergence of
$\partial_t\nu(h_m)$ in $L^2(0,T;H^{-1}(\Omega))$, weak convergence of $v_m$ in
$L^2(0,T;L^2(\Omega)^d)$, and weak convergence of $\nabla h_m$ in
$L^2(0,T;L^2(\Omega)^d)$.
Hence $(v,h)$ solves \eqref{eq:weak-v}--\eqref{eq:weak-h}.

It remains to identify the initial values.
Since $v_m(0)=v_0$ for every $m$ and
\[
v_m \rightharpoonup v \qquad\text{in }H^1(0,T;L^2(\Omega)^d),
\]
continuity of the trace operator from $H^1(0,T;L^2(\Omega)^d)$ to $L^2(\Omega)^d$ yields
\[
v(0)=v_0
\qquad\text{in }L^2(\Omega)^d.
\]
Likewise, from
\[
\nu(h_m)(0)=\nu(h_0^m),
\qquad
\nu(h_m)\rightharpoonup \nu(h)\quad\text{in }H^1(0,T;H^{-1}(\Omega)),
\]
and
\[
\nu(h_0^m)\to \nu(h_0)\quad\text{in }H^{-1}(\Omega),
\]
we obtain by continuity of the trace map on $H^1(0,T;H^{-1}(\Omega))$ that
\[
\nu(h)(0)=\nu(h_0)
\qquad\text{in }H^{-1}(\Omega).
\]

Finally, \eqref{eq:apriori-limit-L2} follows from \eqref{eq:apriori-m} by weak lower semicontinuity,
first with the essential supremum in time. Since
$v\in H^1(0,T;L^2(\Omega)^d)\hookrightarrow C([0,T];L^2(\Omega)^d)$ and
$h\in C([0,T];L^2(\Omega))$ by Remark~\ref{rem:time-continuity-nu}, the estimate may be written
with $\sup_{t\in[0,T]}$ for the continuous representatives.
\end{proof}

\section{Weak--strong uniqueness for the cutoff system}\label{sec:weak-strong-uniqueness}

The low-order energy estimate used in the existence proof does not yield full weak uniqueness. The
obstruction is the nonlinear time derivative $\partial_t\nu(h)$: after subtracting two weak
solutions, the term
\[
\dual{\partial_t(\nu(h_1)-\nu(h_2))}{h_1-h_2}
\]
is not the derivative of a symmetric quadratic distance. Uniqueness can nevertheless be recovered
relative to a sufficiently regular reference solution by using the relative entropy associated with
the inverse map $\nu^{-1}$.

Throughout this section we set
\[
\beta:=\nu^{-1},
\qquad
\Psi(s):=\int_0^s\beta(\rho)\,d\rho.
\]
For a reference enthalpy $H$ and another enthalpy $h$, define
\begin{equation}\label{eq:relative-entropy-def}
\mathscr R(h\mid H)
:=
\int_\Omega
\Bigl[
\Psi(\nu(h))-\Psi(\nu(H))-H\bigl(\nu(h)-\nu(H)\bigr)
\Bigr]\,dx.
\end{equation}
Equivalently,
\begin{equation}\label{eq:relative-entropy-integral}
\mathscr R(h\mid H)
=
\int_\Omega\int_H^h (s-H)m(s)\,ds\,dx,
\end{equation}
and therefore
\begin{equation}\label{eq:relative-entropy-equivalence}
\frac{\delta}{2}\norm{h-H}_{L^2(\Omega)}^2
\le
\mathscr R(h\mid H)
\le
\frac{\Lambda}{2}\norm{h-H}_{L^2(\Omega)}^2.
\end{equation}

\begin{revision}
\begin{lemma}[Relative chain rule]\label{lem:relative-chain-rule}
Let
\[
w,W\in L^2(0,T;H_0^1(\Omega))\cap H^1(0,T;H^{-1}(\Omega)),
\]
set
\[
h:=\beta(w),\qquad H:=\beta(W),\qquad q:=w-W,\qquad r:=h-H,
\]
and assume in addition that
\[
H\in W^{1,1}(0,T;L^\infty(\Omega)).
\]
Then $W=\nu(H)$ belongs to $W^{1,1}(0,T;L^\infty(\Omega))$ and
\[
W_t=m(H)H_t
\qquad\text{in }L^1(0,T;L^\infty(\Omega)).
\]
The map
\[
t\longmapsto \mathscr R(h(t)\mid H(t))
\]
is absolutely continuous on $[0,T]$, and for a.e. $t\in(0,T)$,
\begin{equation}\label{eq:relative-chain-rule}
\dual{\partial_t q(t)}{r(t)}
=
\frac{d}{dt}\mathscr R(h(t)\mid H(t))
+
\int_\Omega H_t(t)\bigl(q(t)-m(H(t))r(t)\bigr)\,dx.
\end{equation}
All duality pairings in \eqref{eq:relative-chain-rule} are between $H^{-1}(\Omega)$ and
$H_0^1(\Omega)$.
\end{lemma}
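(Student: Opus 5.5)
The plan is to split $\mathscr R(h\mid H)$ into three time-dependent pieces, differentiate each with a suitable chain or product rule, and recombine the derivatives algebraically to obtain \eqref{eq:relative-chain-rule}. Since $\Psi'=\beta$ and $\Psi(\nu(h))=\Psi(w)$, the definition \eqref{eq:relative-entropy-def} reads
\[
\mathscr R(h\mid H)=\int_\Omega\Psi(w)\,dx-\int_\Omega\Psi(W)\,dx-\int_\Omega H\,q\,dx .
\]

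\textbf{Step 1: the claim on $W$.} Because $\nu$ is globally Lipschitz and $C^1$ away from $\pm M$, for a.e.\ $x$ the function $t\mapsto H(t,x)$ is absolutely continuous. The scalar chain rule for Lipschitz functions then gives $\partial_t\nu(H)=m(H)H_t$ a.e. On the level sets $\{H=\pm M\}$ one has $H_t=0$ a.e., so the value assigned to $m$ at the kinks is irrelevant. Since $\Lambda$ bounds $m$ and $H_t\in L^1(0,T;L^\infty(\Omega))$, this yields $W\in W^{1,1}(0,T;L^\infty(\Omega))$ with $W_t=m(H)H_t$ in $L^1(0,T;L^\infty(\Omega))$.

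\textbf{Step 2: the three derivatives.} I will establish, for a.e.\ $t$:
\begin{itemize}[label={}]
\item (a) $\frac{d}{dt}\int_\Omega\Psi(w)\,dx=\dual{w_t}{h}$;
\item (b) $\frac{d}{dt}\int_\Omega\Psi(W)\,dx=\ip{W_t}{H}$;
\item (c) $\frac{d}{dt}\int_\Omega Hq\,dx=\ip{H_t}{q}+\dual{q_t}{H}$.
\end{itemize}

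For (a), I use the convex chain rule of Alt--Luckhaus/Brezis type. Here $\Psi$ is convex with $\Psi'=\beta$ globally Lipschitz and $\beta(0)=0$, so $\beta(w)=h\in L^2(0,T;H_0^1(\Omega))$ by the Lipschitz Sobolev chain rule. The proof goes through Steklov averages $w^{\varepsilon}(t)=\varepsilon^{-1}\int_{t-\varepsilon}^{t}w$ and the two convexity inequalities
\[
\beta(w(s))\bigl(w(t)-w(s)\bigr)\le\Psi(w(t))-\Psi(w(s))\le\beta(w(t))\bigl(w(t)-w(s)\bigr).
\]
Dividing by the increment and letting $\varepsilon\to0$, using $w\in C([0,T];L^2(\Omega))$ (Lions--Magenes) and $\Psi(w)\le C w^2$, gives absolute continuity of $t\mapsto\int_\Omega\Psi(w)\,dx$ together with (a).

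For (b), note that $W\in W^{1,1}(0,T;L^\infty(\Omega))$ and $\beta(W)=H\in C([0,T];L^\infty(\Omega))$. Hence (b) follows by the same pointwise-in-$x$ chain rule as in Step 1, integrated over $\Omega$ via Fubini.

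For (c), I need a product rule for $H\in L^2(0,T;H_0^1(\Omega))\cap W^{1,1}(0,T;L^\infty(\Omega))$ and $q\in L^2(0,T;H_0^1(\Omega))\cap H^1(0,T;H^{-1}(\Omega))$. Here $H\in L^2H_0^1$ because $H=\beta(W)$ with $W\in L^2H_0^1$. I will mollify $q$ in time, for which the product rule is classical. Passing to the limit then requires checking that $\dual{q_t}{H}$ is integrable in time, since $q_t\in L^2H^{-1}$ and $H\in L^2H_0^1$, and that $\ip{H_t}{q}$ is integrable, since $H_t\in L^1L^\infty$ and $q\in C(L^2)$.

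\textbf{Step 3: recombination.} Subtracting the derivatives from Step 2,
\[
\frac{d}{dt}\mathscr R=\dual{w_t}{h}-\ip{W_t}{H}-\ip{H_t}{q}-\dual{w_t}{H}+\dual{W_t}{H}=\dual{w_t}{r}-\ip{H_t}{q}.
\]
On the other hand, Step 1 gives
\[
\dual{q_t}{r}=\dual{w_t}{r}-\ip{m(H)H_t}{r}.
\]
Combining the two identities yields exactly \eqref{eq:relative-chain-rule}. Each of the three pieces of $\mathscr R$ is absolutely continuous by Step 2, and hence so is $t\mapsto\mathscr R(h(t)\mid H(t))$.

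\textbf{Main obstacle.} I expect the difficulty to lie in the justification of (a) and (c) at this low regularity, where the pairings must be kept in $H^{-1}\times H_0^1$ throughout. For (a) I would try the Steklov-average convexity argument first. For (c) I would first approximate $q$ by time mollification, falling back to approximating $H$ if the limit passage fails.
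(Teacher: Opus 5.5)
Your proposal is correct and follows essentially the paper's route: the same splitting $\mathscr R=\mathcal J(w)-\mathcal J(W)-\int_\Omega Hq\,dx$, the convexity/difference-quotient chain rule for $\int_\Omega\Psi(w)\,dx$, a time-mollification product rule for $\int_\Omega Hq\,dx$, and the identical recombination; your pointwise treatment of $\int_\Omega\Psi(W)\,dx$ and your choice to mollify only $q$ are harmless variants. Two small points: $\nu$ is in fact $C^1$ everywhere because $m=1-a\TM$ is continuous (the kinks at $\pm M$ are in $m$, not in $\nu$), so the level-set remark in Step 1 is superfluous; and you should state explicitly that the $H^{-1}$-derivative of $W$ coincides with $m(H)H_t$, since your recombination silently identifies $\dual{W_t}{H}$ with $\ip{W_t}{H}$.
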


\begin{proof}
Set
\[
V:=H_0^1(\Omega),\qquad \mathcal H:=L^2(\Omega),\qquad V^*:=H^{-1}(\Omega),
\]
and define the convex functional
\[
\mathcal J(u):=\int_\Omega\Psi(u)\,dx,
\qquad \Psi'(s)=\beta(s).
\]
We first establish the chain rule
\begin{equation}\label{eq:convex-gelfand-chain}
\frac{d}{dt}\mathcal J(u(t))=\dual{u_t(t)}{\beta(u(t))}
\qquad\text{for a.e. }t\in(0,T)
\end{equation}
for every $u\in L^2(0,T;V)\cap H^1(0,T;V^*)$.
Because $\beta$ is globally Lipschitz and $\beta(0)=0$, the Sobolev chain rule gives
$\beta(u)\in L^2(0,T;V)$. Moreover, the Lions--Magenes embedding yields
$u\in C([0,T];\mathcal H)$, and the quadratic growth of $\Psi$ implies that
$t\mapsto\mathcal J(u(t))$ is continuous.

For $s>0$ and a.e. $t\in(0,T-s)$, convexity of $\Psi$ gives
\[
\dual{u(t+s)-u(t)}{\beta(u(t))}
\le \mathcal J(u(t+s))-\mathcal J(u(t))
\le \dual{u(t+s)-u(t)}{\beta(u(t+s))}.
\]
Multiply by a nonnegative $\varphi\in C_c^\infty(0,T)$, divide by $s$, and integrate in time.
As $s\downarrow0$, the difference quotients of $u$ converge to $u_t$ in $L^2(0,T;V^*)$,
while time translations of $\beta(u)\in L^2(0,T;V)$ converge strongly in that space. Both the lower
and upper bounds therefore converge to
\[
\int_0^T\varphi(t)\dual{u_t(t)}{\beta(u(t))}\,dt.
\]
The difference quotient of the continuous scalar function $\mathcal J(u(\cdot))$ converges in the
distributional sense to its distributional derivative. Hence
\[
-\int_0^T\mathcal J(u(t))\varphi'(t)\,dt
=
\int_0^T\varphi(t)\dual{u_t(t)}{\beta(u(t))}\,dt.
\]
The right-hand side belongs to $L^1(0,T)$, so $\mathcal J(u)\in W^{1,1}(0,T)$ and
\eqref{eq:convex-gelfand-chain} follows.

Applying \eqref{eq:convex-gelfand-chain} first to $w$ and then to $W$ gives
\begin{equation}\label{eq:Jw-JW-chain}
\frac{d}{dt}\mathcal J(w)=\dual{w_t}{h},
\qquad
\frac{d}{dt}\mathcal J(W)=\dual{W_t}{H}
\quad\text{in }L^1(0,T).
\end{equation}
Since $H\in W^{1,1}(0,T;L^\infty(\Omega))$, the ordinary Bochner chain rule for the
$C^1$ function $\nu$ with bounded derivative yields
\[
W=\nu(H)\in W^{1,1}(0,T;L^\infty(\Omega)),
\qquad W_t=m(H)H_t.
\]
This derivative agrees with the $H^{-1}$ derivative already supplied by the assumed
$H^1(0,T;H^{-1})$ regularity of $W$.

We also need the product rule
\begin{equation}\label{eq:mixed-product-rule}
\frac{d}{dt}\int_\Omega Hq\,dx
=
\dual{q_t}{H}+\int_\Omega H_tq\,dx
\quad\text{in }L^1(0,T).
\end{equation}
To justify it, mollify $q$ and $H$ in time on compact subintervals of $(0,T)$. The classical
product rule holds for the mollified functions. In the first term, passage to the limit follows from
$q_t\in L^2(0,T;V^*)$ and $H\in L^2(0,T;V)$. In the second term, it follows from
$q\in C([0,T];\mathcal H)$, $H_t\in L^1(0,T;L^\infty(\Omega))$, and the corresponding convergence
of the time mollifications. Since both $q$ and $H$ have continuous representatives in
$L^2(\Omega)$, the identity extends to the endpoints and proves \eqref{eq:mixed-product-rule}.

Using
\[
\mathscr R(h\mid H)=\mathcal J(w)-\mathcal J(W)-\int_\Omega Hq\,dx,
\]
relations \eqref{eq:Jw-JW-chain} and \eqref{eq:mixed-product-rule} imply
\[
\frac{d}{dt}\mathscr R(h\mid H)
=\dual{w_t}{h-H}-\int_\Omega H_tq\,dx.
\]
Finally, $q_t=w_t-W_t$ and $W_t=m(H)H_t$, so
\[
\dual{q_t}{r}
=\dual{w_t}{h-H}-\int_\Omega m(H)H_t(h-H)\,dx.
\]
Combining the last two identities gives \eqref{eq:relative-chain-rule}. The preceding chain and
product rules also show that $t\mapsto\mathscr R(h(t)\mid H(t))$ is absolutely continuous on
$[0,T]$.
\end{proof}
\end{revision}

\begin{theorem}[Weak--strong uniqueness for the cutoff system]
\label{thm:weak-strong-uniqueness-cutoff}
Let $(v,h)$ and $(V,H)$ be weak solutions of the cutoff system
\eqref{eq:cutoff-westervelt} in the sense of Definition~\ref{def:weak-solution}. Assume that the
reference solution satisfies
\[
\rev{H\in W^{1,1}(0,T;L^\infty(\Omega)).}
\]
Then, for every $t\in[0,T]$,
\begin{align}\label{eq:weak-strong-stability}
&\frac12\norm{v(t)-V(t)}_{L^2(\Omega)}^2+
\mathscr R(h(t)\mid H(t))
\le
\left(
\frac12\norm{v(0)-V(0)}_{L^2(\Omega)}^2+
\mathscr R(h(0)\mid H(0))
\right)
\exp\left(
\frac{a}{\delta}\int_0^t\norm{H_t(s)}_{L^\infty(\Omega)}\,ds
\right).
\end{align}
In particular, if the initial data coincide, then
\[
v=V,
\qquad
h=H
\quad\text{a.e. in }\Omega\times(0,T).
\]
\end{theorem}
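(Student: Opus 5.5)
We subtract the weak formulations and combine the velocity energy with the relative entropy. Set $q:=\nu(h)-\nu(H)$, $r:=h-H$ and $u:=v-V$. With $w=\nu(h)$ and $W=\nu(H)$, both in $L^2(0,T;H_0^1(\Omega))\cap H^1(0,T;H^{-1}(\Omega))$ by Definition~\ref{def:weak-solution}, the hypotheses of Lemma~\ref{lem:relative-chain-rule} hold.

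Subtracting \eqref{eq:weak-v} for the two solutions and testing with $\varphi=u(t)$ gives
\[
\tfrac12\tfrac{d}{dt}\norm{u}_{L^2(\Omega)}^2+\ip{\nabla r}{u}=0 .
\]
This is valid since $u\in H^1(0,T;L^2(\Omega)^d)$. Subtracting \eqref{eq:weak-h} and testing with $\xi=r(t)\in H_0^1(\Omega)$ gives
\[
\dual{\partial_t q}{r}-\ip{u}{\nabla r}+b\norm{\nabla r}_{L^2(\Omega)}^2=0 .
\]
Adding the two identities cancels the coupling terms $\pm\ip{u}{\nabla r}$. We then insert \eqref{eq:relative-chain-rule} for $\dual{\partial_t q}{r}$ and drop the nonnegative dissipation term. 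For a.e. $t$ this yields
\[
\frac{d}{dt}\Bigl(\tfrac12\norm{u}^2+\mathscr R(h\mid H)\Bigr)
\le -\int_\Omega H_t\bigl(q-m(H)r\bigr)\,dx .
\]
Both summands on the left are absolutely continuous, by Lemma~\ref{lem:relative-chain-rule} and the $H^1$-in-time regularity of $u$.

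The key step is a pointwise quadratic bound on the remainder $q-m(H)r=\int_H^h\bigl(m(s)-m(H)\bigr)\,ds$. The map $m=1-a\TM$ is Lipschitz with constant $a$, since $\TM$ is $1$-Lipschitz. Hence
\[
\abs{q-m(H)r}\le a\Bigl|\int_H^h\abs{s-H}\,ds\Bigr|=\tfrac a2\abs{h-H}^2 .
\]
It follows that
\[
\abs{\int_\Omega H_t(q-m(H)r)\,dx}\le\tfrac a2\norm{H_t}_{L^\infty}\norm{r}_{L^2}^2\le\frac a\delta\norm{H_t}_{L^\infty}\mathscr R(h\mid H),
\]
where the last inequality uses the lower bound in \eqref{eq:relative-entropy-equivalence}. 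Writing $\Phi(t):=\tfrac12\norm{u(t)}^2+\mathscr R(h(t)\mid H(t))$, this gives $\Phi'\le\frac a\delta\norm{H_t}_{L^\infty}\Phi$ a.e. The coefficient lies in $L^1(0,T)$ because $H\in W^{1,1}(0,T;L^\infty(\Omega))$. Gronwall's lemma for absolutely continuous functions then gives \eqref{eq:weak-strong-stability} for every $t\in[0,T]$, using the continuous representatives from Remark~\ref{rem:time-continuity-nu}.

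For uniqueness, if the initial data coincide then $\Phi(0)=0$. Therefore $\Phi\equiv0$, and by \eqref{eq:relative-entropy-equivalence} this gives $v=V$ and $h=H$.

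The main obstacle is rigor in the energy identity, namely the a.e.-in-time testing with $r$, which is only $L^2(0,T;H_0^1)$, and the measurability and integrability of each term. This is handled by Lemma~\ref{lem:relative-chain-rule}. The remaining work is to check that each term lies in $L^1(0,T)$ and then integrate the differential identity over $(0,t)$ before applying Gronwall's lemma.
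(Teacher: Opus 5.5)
Your proposal is correct and follows the paper's proof essentially step for step: you subtract the weak formulations and test with $v-V$ and $h-H$, apply the relative chain rule of Lemma~\ref{lem:relative-chain-rule}, bound $|q-m(H)r|\le \frac a2|h-H|^2$ using the $a$-Lipschitz continuity of $m$, and close with Gronwall. The only difference is cosmetic: you read off $v=V$ directly from $\Phi\equiv 0$, whereas the paper deduces it from $\partial_t z=-\nabla r=0$ and $z(0)=0$, and both are valid.
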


\begin{proof}
Set
\[
z:=v-V,
\qquad
r:=h-H,
\qquad
q:=\nu(h)-\nu(H).
\]
Subtracting the two weak formulations gives
\[
\partial_t z+\nabla r=0
\qquad\text{in }L^2(0,T;L^2(\Omega)^d),
\]
and
\[
\partial_t q+\diver z=b\Delta r
\qquad\text{in }L^2(0,T;H^{-1}(\Omega)).
\]
Testing the first equation by $z$ yields
\[
\frac12\frac{d}{dt}\norm{z}_{L^2(\Omega)}^2+\ip{\nabla r}{z}=0
\]
for a.e.\ $t\in(0,T)$. Testing the second equation by $r\in H_0^1(\Omega)$ gives
\[
\dual{\partial_t q}{r}-\ip{z}{\nabla r}+b\norm{\nabla r}_{L^2(\Omega)}^2=0.
\]
Adding the two identities and using Lemma~\ref{lem:relative-chain-rule}, we obtain
\begin{align*}
&\frac{d}{dt}\left(
\frac12\norm{z}_{L^2(\Omega)}^2+\mathscr R(h\mid H)
\right)
+b\norm{\nabla r}_{L^2(\Omega)}^2
=
-\int_\Omega H_t\bigl(q-m(H)r\bigr)\,dx .
\end{align*}
Since \(m=1-a\TM\) is globally Lipschitz with Lipschitz constant at most \(a\), Taylor's formula with integral remainder gives, for a.e. \(x\),
\[
\abs{q-m(H)r}
=
\abs{\nu(h)-\nu(H)-m(H)(h-H)}
\le
\frac a2\abs{h-H}^2.
\]
Using \eqref{eq:relative-entropy-equivalence}, this gives
\[
\abs{\int_\Omega H_t\bigl(q-m(H)r\bigr)\,dx}
\le
\frac a2\norm{H_t}_{L^\infty(\Omega)}\norm{r}_{L^2(\Omega)}^2
\le
\frac a\delta\norm{H_t}_{L^\infty(\Omega)}\mathscr R(h\mid H).
\]
Therefore, with
\[
\mathcal E(t):=\frac12\norm{z(t)}_{L^2(\Omega)}^2+\mathscr R(h(t)\mid H(t)),
\]
we have
\[
\frac{d}{dt}\mathcal E(t)
\le
\frac a\delta\norm{H_t(t)}_{L^\infty(\Omega)}\mathcal E(t)
\]
for a.e.\ $t\in(0,T)$. Gronwall's inequality yields \eqref{eq:weak-strong-stability}. If the
initial data coincide, then $\mathcal E(0)=0$, so $\mathcal E(t)=0$ for all $t\in[0,T]$. By
\eqref{eq:relative-entropy-equivalence}, $h=H$ a.e. in $\Omega\times(0,T)$. The equation
$\partial_t z=-\nabla r$ then gives $\partial_t z=0$, and since $z(0)=0$, we also obtain $v=V$.
\end{proof}

\section[A posteriori removal of the cutoff via a uniform bound]{A posteriori removal of the cutoff via an $L^\infty$ bound}\label{sec:cutoff-removal}

In this section we show how the cutoff can be removed \emph{a posteriori} once a suitable
$L^\infty$-control of the enthalpy is available. We first state the elementary criterion and then
formulate a compactness-based $H^2$ criterion for Rothe approximations. This keeps the cutoff
removal separate from the low-order existence theory: the latter yields weak solutions of the
cutoff problem, whereas recovering the uncut system requires additional control that is not
available from the basic energy estimate alone. We continue to work with the symmetric cutoff
\[
\TM(r)=\max\{-M,\min\{r,M\}\},
\qquad M\in(0,1/a),
\]
and the associated mobility and primitive
\[
m_M(h)=1-a\,\TM(h),
\qquad
\nu_M(h)=\int_0^h m_M(s)\,ds.
\]
We also write
\[
\nu_{\mathrm{unc}}(r):=r-\frac a2 r^2
\]
for the primitive corresponding to the uncut mobility $1-ar$.

\subsection{A simple cutoff-removal criterion}

\begin{lemma}[Cutoff inactive if the solution stays below the threshold]\label{lem:cutoff-inactive}
Let $(v,h)$ be a weak solution of the cutoff Westervelt system on $(0,T)$ in the sense of
Definition~\ref{def:weak-solution}. Assume, in addition, that
\[
\|h\|_{L^\infty(\Omega\times(0,T))} \le M.
\]
Then $\TM(h)=h$ a.e.\ in $\Omega\times(0,T)$, and consequently
\[
m_M(h)=1-ah,
\qquad
\nu_M(h)=\nu_{\mathrm{unc}}(h)
\quad\text{a.e. in }\Omega\times(0,T).
\]
Thus $(v,h)$ solves the \emph{uncut first-order} Westervelt system with primitive
$\nu_{\mathrm{unc}}$. If, in addition,
\[
v_0=-\nabla\psi_0
\qquad\text{for some }\psi_0\in H_0^1(\Omega),
\]
then Proposition~\ref{prop:potential-reconstruction} yields a scalar potential
$\psi$ solving the scalar Westervelt equation in conservative form.
\end{lemma}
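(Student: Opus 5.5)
The argument is elementary and I would carry it out pointwise, followed by a direct appeal to Proposition~\ref{prop:potential-reconstruction}.

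\textbf{Step 1: the cutoff is inactive.} Choose a representative of $h$ and a null set $N\subset\Omega\times(0,T)$ such that $|h(x,t)|\le M$ for all $(x,t)\notin N$. Outside $N$ we have $-M\le h\le M$, so $\min\{h,M\}=h$ and hence $\TM(h)=\max\{-M,h\}=h$. Consequently $m_M(h)=1-ah$ a.e.

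\textbf{Step 2: identification of the primitives.} For $|r|\le M$ every $s$ between $0$ and $r$ also satisfies $|s|\le M$. On that range $m_M(s)=1-as$, so
\[
\nu_M(r)=\int_0^r(1-as)\,ds=r-\frac a2r^2=\nu_{\mathrm{unc}}(r).
\]
This gives $\nu_M(h)=\nu_{\mathrm{unc}}(h)$ a.e. As elements of $L^2(0,T;H_0^1(\Omega))\cap H^1(0,T;H^{-1}(\Omega))$, these two functions are therefore the same object. In particular $\partial_t\nu_M(h)=\partial_t\nu_{\mathrm{unc}}(h)$ in the distributional sense, hence in $L^2(0,T;H^{-1}(\Omega))$. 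We also have $\nu_M(h_0)=\nu_{\mathrm{unc}}(h_0)$: the $L^2$-continuous representative gives $|h(0)|\le M$ a.e., and $h(0)=h_0$ by Remark~\ref{rem:time-continuity-nu}.

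\textbf{Step 3: the uncut system and the potential.} With this identification, \eqref{eq:weak-v}--\eqref{eq:weak-h} become exactly \eqref{eq:uncut-first-order} with primitive $\nu_{\mathrm{unc}}$. The regularity of $(v,h)$ in Definition~\ref{def:weak-solution} includes every hypothesis of Proposition~\ref{prop:potential-reconstruction}; in particular $\nu(h)\in H^1(0,T;H^{-1}(\Omega))$. Under the compatibility condition $v_0=-\nabla\psi_0$, that proposition then provides $\psi$ solving the conservative scalar Westervelt equation.

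\textbf{Main subtlety.} The only delicate point is Step 2. One must make sure that the equality of the nonlinear compositions a.e. transfers to equality of their weak time derivatives. This follows because both are the same element of $L^1_{\mathrm{loc}}$, so their distributional derivatives coincide.
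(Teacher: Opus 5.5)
Your proposal is correct and follows the same route as the paper. Like the paper, you argue that $\TM(h)=h$ on the a.e.\ set where $|h|\le M$, identify the cutoff and uncut primitives, insert this into the weak formulation, and then appeal to Proposition~\ref{prop:potential-reconstruction}. You additionally spell out details the paper leaves implicit: that $\nu_M=\nu_{\mathrm{unc}}$ on $[-M,M]$, that the distributional time derivatives therefore coincide, and that $|h(0)|\le M$ holds for the $L^2$-continuous representative.
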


\begin{proof}
If $\|h\|_{L^\infty(\Omega\times(0,T))}\le M$, then $|h(x,t)|\le M$ a.e., hence by definition
$\TM(h)=h$ a.e. in $\Omega\times(0,T)$. Inserting this identity into the weak formulation of the cutoff
problem yields the weak formulation of the uncut first-order system. The last statement follows from
Proposition~\ref{prop:potential-reconstruction}.
\end{proof}

\subsection{A conditional cutoff-removal criterion from a discrete $H^2$-bound}

The previous lemma reduces cutoff removal to proving a bound of the form
\[
\|h\|_{L^\infty(\Omega\times(0,T))}\le M.
\]
At the level of the low-order existence theory established above, such a bound is not automatic.
We therefore formulate a rigorous criterion which shows that cutoff removal follows once one can
derive a uniform discrete $H^2$-bound along a convergent family of Rothe approximations.

\begin{proposition}[Conditional cutoff removal from a uniform discrete $H^2$-bound]\label{prop:cutoff-removal-H2}
Assume $\Omega\subset\R^d$, $d\le 3$, has $C^{1,1}$ boundary. Let $(v,h)$ be a weak solution of
the cutoff Westervelt system on $(0,T)$ in the sense of Definition~\ref{def:weak-solution}.
Assume that there exist time steps $\tau_j\to0$ and corresponding Rothe interpolants
$\bar h_{\tau_j}$ such that
\begin{equation}\label{eq:H2-criterion-strong-L2}
\bar h_{\tau_j}\to h
\qquad\text{strongly in }L^2(0,T;L^2(\Omega)),
\end{equation}
and
\begin{equation}\label{eq:discrete-H2-bound}
\sup_{j\in\N}
\|\bar h_{\tau_j}\|_{L^\infty(0,T;H^2(\Omega)\cap H_0^1(\Omega))}
\le K
\end{equation}
for some constant $K>0$. Then
\(
h\in L^\infty(0,T;H^2(\Omega)\cap H_0^1(\Omega))
\)
and
\begin{equation}\label{eq:limit-H2-bound}
\|h\|_{L^\infty(0,T;H^2(\Omega))}\le K.
\end{equation}
Consequently, $\|h\|_{L^\infty(\Omega\times(0,T))}
\le C_{\rm emb}\,K$
where $C_{\rm emb}>0$ depends only on $\Omega$ and $d$.
In particular, if
\(
C_{\rm emb}\,K \le M,
\)
then the cutoff is inactive and $(v,h)$ solves the uncut first-order Westervelt system by
Lemma~\ref{lem:cutoff-inactive}. If, in addition, the compatibility
$v_0=-\nabla\psi_0$ holds for some $\psi_0\in H_0^1(\Omega)$, then
Proposition~\ref{prop:potential-reconstruction} yields the corresponding scalar Westervelt equation
in conservative form.
\end{proposition}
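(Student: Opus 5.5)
The plan is to combine weak-$*$ compactness in $L^\infty(0,T;H^2(\Omega)\cap H_0^1(\Omega))$ with identification of the limit through the assumed strong convergence \eqref{eq:H2-criterion-strong-L2}. Then we pass the bound to the limit by weak-$*$ lower semicontinuity of the norm, and finish with the Sobolev embedding $H^2(\Omega)\hookrightarrow C(\overline\Omega)$ for $d\le3$.

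\textbf{Step 1: a weak-$*$ limit in $L^\infty(0,T;Y)$.} Set $Y:=H^2(\Omega)\cap H_0^1(\Omega)$, normed by the $H^2$ norm. It is a closed subspace of the Hilbert space $H^2(\Omega)$, hence itself a separable Hilbert space. Therefore $L^\infty(0,T;Y)$ is the dual of the separable space $L^1(0,T;Y)$. By \eqref{eq:discrete-H2-bound} and Banach--Alaoglu, a subsequence satisfies $\bar h_{\tau_j}\stackrel{*}{\rightharpoonup} g$ in $L^\infty(0,T;Y)$, with
\[
\|g\|_{L^\infty(0,T;H^2(\Omega))}\le\liminf_j\|\bar h_{\tau_j}\|_{L^\infty(0,T;H^2(\Omega))}\le K .
\]

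\textbf{Step 2: identification $g=h$.} Testing against $\phi\in L^2(0,T;L^2(\Omega))\subset L^1(0,T;Y^*)$ shows that $\bar h_{\tau_j}\to g$ in the sense of distributions on $(0,T)\times\Omega$. By \eqref{eq:H2-criterion-strong-L2}, the same sequence converges to $h$ strongly, hence in distributions. Uniqueness of distributional limits gives $h=g$. This proves $h\in L^\infty(0,T;Y)$ together with \eqref{eq:limit-H2-bound}.

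\textbf{Step 3: the pointwise bound.} For $d\le3$ we have the continuous embedding $H^2(\Omega)\hookrightarrow C(\overline\Omega)$, with constant $C_{\rm emb}$ depending only on $\Omega$ and $d$; this uses that $\Omega$ is a bounded Lipschitz domain, in particular a $C^{1,1}$ one. Hence for a.e.\ $t$,
\[
\|h(t)\|_{L^\infty(\Omega)}\le C_{\rm emb}\|h(t)\|_{H^2(\Omega)}\le C_{\rm emb}K .
\]
To pass to the space-time statement $\|h\|_{L^\infty(\Omega\times(0,T))}\le C_{\rm emb}K$, I use the standard identification of the measurable function on $\Omega\times(0,T)$ with the Bochner function. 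The null set in $t$ is harmless by Fubini.

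\textbf{Step 4: conclusion.} If $C_{\rm emb}K\le M$, Lemma~\ref{lem:cutoff-inactive} applies and shows that $(v,h)$ solves the uncut first-order system. Under the compatibility $v_0=-\nabla\psi_0$, Proposition~\ref{prop:potential-reconstruction} then yields the scalar equation in conservative form.

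The main subtlety is Step 1: one must check that $L^\infty(0,T;Y)$ is a dual space, which needs $Y$ separable (or reflexive), so that Banach--Alaoglu applies. One must also check that the norm is weak-$*$ lower semicontinuous, which holds because it is a dual norm. The rest is bookkeeping.
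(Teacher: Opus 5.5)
Your proposal is correct and follows the paper's proof essentially step by step. You use weak-$*$ sequential compactness in $L^\infty(0,T;H^2(\Omega)\cap H_0^1(\Omega))$ via separability of the predual, identify the weak-$*$ limit with $h$ through the strong $L^2$ convergence, apply the embedding $H^2(\Omega)\hookrightarrow L^\infty(\Omega)$ for $d\le 3$, and then invoke Lemma~\ref{lem:cutoff-inactive} and Proposition~\ref{prop:potential-reconstruction}; the only extra detail is that you spell out the Fubini passage from the a.e.-in-time bound to the space--time $L^\infty$ bound, which the paper leaves implicit.
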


\begin{proof}
By \eqref{eq:discrete-H2-bound}, the family $(\bar h_{\tau_j})_{j\in\N}$ is bounded in
$L^\infty(0,T;H^2(\Omega)\cap H_0^1(\Omega))$.
\begin{revision}
The space $X:=H^2(\Omega)\cap H_0^1(\Omega)$ is a separable Hilbert space. Hence
$L^1(0,T;X^*)$ is separable, and Banach--Alaoglu gives weak-* \emph{sequential} compactness for
bounded sequences in the canonical dual realization of $L^\infty(0,T;X)$.
\end{revision}
\rev{Thus, after extraction of a subsequence, there exists}
\[
\widetilde h\in L^\infty(0,T;H^2(\Omega)\cap H_0^1(\Omega))
\]
such that
\[
\bar h_{\tau_j}\stackrel{*}{\rightharpoonup}\widetilde h
\qquad\text{in }L^\infty(0,T;H^2(\Omega)\cap H_0^1(\Omega)).
\]
Moreover,
\[
\|\widetilde h\|_{L^\infty(0,T;H^2(\Omega))}\le K.
\]
The strong convergence \eqref{eq:H2-criterion-strong-L2} implies that the same subsequence converges
to $h$ strongly in $L^2(0,T;L^2(\Omega))$. Therefore the weak-* limit $\widetilde h$ and the strong
$L^2$-limit $h$ coincide almost everywhere in $\Omega\times(0,T)$. Hence $\widetilde h=h$, which
proves
\[
h\in L^\infty(0,T;H^2(\Omega)\cap H_0^1(\Omega))
\]
and \eqref{eq:limit-H2-bound}.
Finally, since $H^2(\Omega)\hookrightarrow L^\infty(\Omega)$ continuously for $d\le 3$, we obtain
\(
\|h\|_{L^\infty(\Omega\times(0,T))}
\le C_{\rm emb}\,K.
\)
The final assertion then follows from
Lemma~\ref{lem:cutoff-inactive} and Proposition~\ref{prop:potential-reconstruction}.
\end{proof}

\begin{remark}[\rev{One-dimensional alternative criterion}]
\label{rem:one-dimensional-small-data-route}
\begin{revision}
On $\Omega=(0,1)$, a uniform-in-time $H_0^1$ bound is already sufficient for cutoff removal. More
precisely, suppose that a convergent family of Rothe interpolants satisfies
\[
\bar h_{\tau_j}\to h
\qquad\text{strongly in }L^2(0,T;L^2(0,1)),
\]
and
\[
\sup_j\|\bar h_{\tau_j}\|_{L^\infty(0,T;H_0^1(0,1))}\le K_1.
\]
The one-dimensional embedding $H_0^1(0,1)\hookrightarrow L^\infty(0,1)$ gives
\[
\|\bar h_{\tau_j}\|_{L^\infty((0,T)\times(0,1))}\le C_{1D}K_1.
\]
After extraction of a subsequence, the strong $L^2$ convergence is pointwise almost everywhere;
therefore $|h|\le C_{1D}K_1$ almost everywhere. If $C_{1D}K_1\le M$, Lemma~\ref{lem:cutoff-inactive}
applies. The basic energy estimate supplies only an $L^2(0,T;H_0^1)$ bound, not the required
$L^\infty(0,T;H_0^1)$ bound. Establishing the latter by a higher-order small-data argument is beyond
the scope of this paper, and no one-dimensional small-data theorem is claimed here.
\end{revision}
\end{remark}

\section{Consistency analysis for the forced cutoff problem}\label{sec:error-estimate}

In order to derive a quantitative estimate for the Rothe approximation, it is convenient to include
forcing terms and consider
\begin{equation}\label{eq:cutoff-westervelt-forced}
\begin{aligned}
\dt v + \nabla h &= f_v &&\text{in } (0,T)\times\Omega,\\
\dt \nu(h) + \diver v &= b\Delta h + f_h &&\text{in } (0,T)\times\Omega,\\
h&=0 &&\text{on } (0,T)\times\partial\Omega,\\
v(0)=v_0,\qquad h(0)&=h_0 &&\text{in } \Omega.
\end{aligned}
\end{equation}
Here $f_v\in L^2(0,T;L^2(\Omega)^d)$ and $f_h\in L^2(0,T;H^{-1}(\Omega))$ are given.
For the time discretization we use the averages
\begin{equation}\label{eq:def-forcing-averages}
f_v^n := \frac1\tau\int_{t^{n-1}}^{t^n} f_v(s)\,\ds,
\qquad
f_h^n := \frac1\tau\int_{t^{n-1}}^{t^n} f_h(s)\,\ds.
\end{equation}
The Rothe scheme now reads: given $(v^{n-1},h^{n-1})$, find $(v^n,h^n)$ such that
\begin{equation}\label{eq:rothe-system-forced}
\begin{aligned}
\frac{1}{\tau}\ip{v^n-v^{n-1}}{\varphi} + \ip{\nabla h^n}{\varphi} &= \ip{f_v^n}{\varphi}
&&\forall\,\varphi\in L^2(\Omega)^d,\\
\frac{1}{\tau}\ip{\nu(h^n)-\nu(h^{n-1})}{\xi} - \ip{v^n}{\nabla\xi} + b\ip{\nabla h^n}{\nabla\xi} &= \dual{f_h^n}{\xi}
&&\forall\,\xi\in H_0^1(\Omega).
\end{aligned}
\end{equation}

\begin{proposition}[Existence of one forced time step]\label{prop:forced-step}
For every $n=1,\dots,N$, every
\[
(v^{n-1},h^{n-1})\in L^2(\Omega)^d\times L^2(\Omega),
\]
and every
\[
f_v^n\in L^2(\Omega)^d,
\qquad
f_h^n\in H^{-1}(\Omega),
\]
the forced Rothe step \eqref{eq:rothe-system-forced} admits a unique solution
\[
h^n\in H_0^1(\Omega),
\qquad
v^n\in L^2(\Omega)^d.
\]
\end{proposition}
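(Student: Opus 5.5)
The plan is to reduce the forced step to a scalar monotone problem for $h^n$ and then copy the argument of the unforced existence proposition. The first equation of \eqref{eq:rothe-system-forced} holds for all $\varphi\in L^2(\Omega)^d$, so it is equivalent to the pointwise identity
\[
v^n = v^{n-1} - \tau\nabla h^n + \tau f_v^n \qquad\text{in } L^2(\Omega)^d .
\]
Once $h^n\in H_0^1(\Omega)$ is known, this identity determines $v^n$ uniquely. Substituting it into the second equation of \eqref{eq:rothe-system-forced} gives the scalar problem: find $h^n\in H_0^1(\Omega)$ with
\[
\frac1\tau\ip{\nu(h^n)}{\xi}+(b+\tau)\ip{\nabla h^n}{\nabla\xi}
=\frac1\tau\ip{\nu(h^{n-1})}{\xi}+\ip{v^{n-1}}{\nabla\xi}+\tau\ip{f_v^n}{\nabla\xi}+\dual{f_h^n}{\xi}
\]
for all $\xi\in H_0^1(\Omega)$. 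Conversely, any solution $h^n$ of this scalar problem, with $v^n$ defined by the identity above, solves \eqref{eq:rothe-system-forced}. The two problems are therefore equivalent, and it suffices to prove existence and uniqueness for the scalar one.

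The left-hand side is the same operator $A:H_0^1(\Omega)\to H^{-1}(\Omega)$ used in the unforced proposition. Its properties were already checked there and I will reuse them:
\begin{itemize}
\item $A$ is hemicontinuous and bounded, since $\nu$ is globally Lipschitz with $\nu(0)=0$;
\item $A$ is strictly monotone, by \eqref{eq:nu-bounds};
\item $A$ is coercive, since $\dual{A(w)}{w}\ge \frac{\delta}{\tau}\norm{w}_{L^2}^2+(b+\tau)\norm{\nabla w}_{L^2}^2$.
\end{itemize}
The only new point is that the right-hand side is an element of $H^{-1}(\Omega)$. This holds because $\nu(h^{n-1})\in L^2(\Omega)$, because $v^{n-1}$ and $f_v^n$ lie in $L^2(\Omega)^d$ (so their pairings with $\nabla\xi$ are bounded by $\norm{\xi}_{H_0^1}$), and because $f_h^n\in H^{-1}(\Omega)$ by hypothesis. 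The Browder--Minty theorem \cite{ZeidlerIIB} then gives a unique solution $h^n$.

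For uniqueness of the pair $(v^n,h^n)$, take two solutions of \eqref{eq:rothe-system-forced}. Both satisfy the velocity identity, so both $h$-components solve the same scalar problem. Strict monotonicity of $A$ forces the $h$-components to agree, and the velocity identity then forces the $v$-components to agree.

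No step is a real obstacle; the argument is a perturbation of the unforced case. The one thing to check carefully is the substitution: the term $-\ip{v^n}{\nabla\xi}$ produces the extra $\tau\nabla h^n$ contribution, which is what turns the diffusion coefficient into $b+\tau$, and it also produces the new data term $\tau\ip{f_v^n}{\nabla\xi}$, which is only a bounded functional because $f_v^n\in L^2(\Omega)^d$.
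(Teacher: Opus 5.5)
Your proposal is correct and matches the paper's proof: eliminate $v^n$ through the first equation, observe that the resulting scalar problem has the same bounded, hemicontinuous, strictly monotone and coercive operator with diffusion coefficient $b+\tau$, check that the new data term $\tau\ip{f_v^n}{\nabla\xi}+\dual{f_h^n}{\xi}$ lies in $H^{-1}(\Omega)$, and apply Browder--Minty. Your explicit check that the scalar problem and the coupled system are equivalent, and that the pair $(v^n,h^n)$ is unique, spells out what the paper leaves implicit.
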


\begin{proof}
Eliminating $v^n$ from the first equation gives
\[
v^n=v^{n-1}-\tau\nabla h^n+\tau f_v^n.
\]
Inserting this identity into the second equation yields the scalar problem
\[
\frac1\tau \ip{\nu(h^n)-\nu(h^{n-1})}{\xi}
+
(b+\tau)\ip{\nabla h^n}{\nabla\xi}
=
\ip{v^{n-1}}{\nabla\xi}
+
\tau\ip{f_v^n}{\nabla\xi}
+
\dual{f_h^n}{\xi}
\]
for all $\xi\in H_0^1(\Omega)$. The operator on the left-hand side is the same bounded, hemicontinuous, strictly monotone and
coercive operator as in the unforced case. The right-hand side is a
bounded linear functional on $H_0^1(\Omega)$. Hence the Browder--Minty theorem \cite{ZeidlerIIB} gives a unique
$h^n\in H_0^1(\Omega)$, and then $v^n$ is uniquely defined by the update formula.
\end{proof}

\begin{lemma}[Consistency residuals for regular solutions]\label{lem:consistency-residuals}
Let $(v,h)$ be a solution of the forced cutoff problem \eqref{eq:cutoff-westervelt-forced} such
that
\[
v\in H^1(0,T;L^2(\Omega)^d),\qquad
h\in H^1(0,T;H_0^1(\Omega)),\qquad
\nu(h)\in W^{1,1}(0,T;H^{-1}(\Omega)).
\]
Assume that the equations in \eqref{eq:cutoff-westervelt-forced} hold for a.e.\ $t\in(0,T)$ in $L^2(\Omega)^d$ and $H^{-1}(\Omega)$, respectively. We use the continuous
representatives of $v$ in $C([0,T];L^2(\Omega)^d)$ and of $h$ in
$C([0,T];H_0^1(\Omega))$\rev{.}
\begin{revision}
Because $\nu$ is globally Lipschitz, $t\mapsto\nu(h(t))$ is continuous from $[0,T]$ into
$L^2(\Omega)$. This representative agrees almost everywhere with the
$W^{1,1}(0,T;H^{-1}(\Omega))$ representative; since both are continuous as $H^{-1}(\Omega)$-valued
maps, they agree at every $t\in[0,T]$. Thus the endpoint values $\nu(h(t^n))$ below are unambiguous.
\end{revision}

For each $n=1,\dots,N$, define
\begin{equation}\label{eq:def-Rv}
R_v^n
:=
\frac1\tau\int_{t^{n-1}}^{t^n}\bigl(\nabla h(t^n)-\nabla h(s)\bigr)\,\ds
\in L^2(\Omega)^d,
\end{equation}
and let $R_\nu^n\in H^{-1}(\Omega)$ be given by
\begin{equation}\label{eq:def-Rnu}
\dual{R_\nu^n}{\xi}
:=
-\frac1\tau\int_{t^{n-1}}^{t^n}\ip{v(t^n)-v(s)}{\nabla\xi}\,\ds
+
\frac{b}{\tau}\int_{t^{n-1}}^{t^n}\ip{\nabla h(t^n)-\nabla h(s)}{\nabla\xi}\,\ds
\end{equation}
for all $\xi\in H_0^1(\Omega)$. Then the exact solution satisfies
\begin{equation}\label{eq:exact-step-v}
\frac{1}{\tau} \ip{v(t^n)-v(t^{n-1})}{\varphi}
+
\ip{\nabla h(t^n)}{\varphi}
=
\ip{f_v^n}{\varphi}
+
\ip{R_v^n}{\varphi}
\qquad \forall \varphi\in L^2(\Omega)^d,
\end{equation}
and
\begin{equation}\label{eq:exact-step-h}
\frac{1}{\tau} \dual{\nu(h(t^n))-\nu(h(t^{n-1}))}{\xi}
-
\ip{v(t^n)}{\nabla\xi}
+
b\ip{\nabla h(t^n)}{\nabla\xi}
=
\dual{f_h^n}{\xi}
+
\dual{R_\nu^n}{\xi}
\end{equation}
for all $\xi\in H_0^1(\Omega)$. Moreover,
\begin{equation}\label{eq:Rv-bound}
\tau\sum_{n=1}^N \norm{R_v^n}_{L^2(\Omega)}^2
\le
C\tau^2 \norm{h_t}_{L^2(0,T;H_0^1(\Omega))}^2,
\end{equation}
and
\begin{equation}\label{eq:Rnu-bound}
\tau\sum_{n=1}^N \norm{R_\nu^n}_{H^{-1}(\Omega)}^2
\le
C\tau^2
\left(
\norm{v_t}_{L^2(0,T;L^2(\Omega)^d)}^2
+
\norm{h_t}_{L^2(0,T;H_0^1(\Omega))}^2
\right).
\end{equation}
In addition, for every $n=1,\dots,N$,
\begin{equation}\label{eq:Rnu-max-bound}
\norm{R_\nu^n}_{H^{-1}(\Omega)}^2
\le
C\tau
\int_{t^{n-1}}^{t^n}
\Bigl(
\norm{v_t(r)}_{L^2(\Omega)^d}^2
+
\norm{h_t(r)}_{H_0^1(\Omega)}^2
\Bigr)\,dr.
\end{equation}
The constant $C$ depends on $b$ but not on $\tau$.
\end{lemma}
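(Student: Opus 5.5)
The plan is to integrate the exact equations over each subinterval $(t^{n-1},t^n)$, divide by $\tau$, and read off the residuals as the mismatch between interval averages and endpoint values. The residual bounds then follow from Cauchy--Schwarz in time.

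\textbf{Step 1: the identity \eqref{eq:exact-step-v}.} Since $v\in H^1(0,T;L^2(\Omega)^d)$, the fundamental theorem of calculus for Bochner integrals gives
\[
v(t^n)-v(t^{n-1})=\int_{t^{n-1}}^{t^n}v_t(s)\,\ds=\int_{t^{n-1}}^{t^n}\bigl(f_v(s)-\nabla h(s)\bigr)\,\ds .
\]
Dividing by $\tau$ and adding $\nabla h(t^n)$ produces $f_v^n$ plus exactly $R_v^n$ from \eqref{eq:def-Rv}. Testing with $\varphi$ gives \eqref{eq:exact-step-v}.

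\textbf{Step 2: the identity \eqref{eq:exact-step-h}.} We have $\nu(h)\in W^{1,1}(0,T;H^{-1}(\Omega))$, and the endpoint values are unambiguous by the continuity remark in the statement. Hence
\[
\nu(h(t^n))-\nu(h(t^{n-1}))=\int_{t^{n-1}}^{t^n}\partial_t\nu(h)(s)\,\ds
\quad\text{in } H^{-1}(\Omega).
\]
Pair with $\xi$ and substitute the weak equation
\[
\dual{\partial_t\nu(h)}{\xi}=\ip{v}{\nabla\xi}-b\ip{\nabla h}{\nabla\xi}+\dual{f_h}{\xi}.
\]
Next subtract $\ip{v(t^n)}{\nabla\xi}$ and add $b\ip{\nabla h(t^n)}{\nabla\xi}$, writing each as the average over the interval of the constant value. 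This regroups the terms into $\dual{f_h^n}{\xi}+\dual{R_\nu^n}{\xi}$ with $R_\nu^n$ as in \eqref{eq:def-Rnu}. That $R_\nu^n\in H^{-1}(\Omega)$ is clear from the bound in Step~3.

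\textbf{Step 3: the residual bounds.} For $s\in(t^{n-1},t^n)$,
\[
\nabla h(t^n)-\nabla h(s)=\int_s^{t^n}\nabla h_t(r)\,dr .
\]
Cauchy--Schwarz gives
\[
\norm{\nabla h(t^n)-\nabla h(s)}_{L^2}\le \tau^{1/2}\Bigl(\int_{t^{n-1}}^{t^n}\norm{h_t}_{H_0^1}^2\,dr\Bigr)^{1/2},
\]
and the same holds for $v$ with $v_t$ and the $L^2(\Omega)^d$ norm. Averaging over $s$ yields
\[
\norm{R_v^n}_{L^2}^2\le \tau\int_{t^{n-1}}^{t^n}\norm{h_t}_{H_0^1}^2\,dr .
\]
Using $|\ip{g}{\nabla\xi}|\le\norm{g}_{L^2}\norm{\xi}_{H_0^1}$ (with the gradient norm on $H_0^1$) gives
\[
\norm{R_\nu^n}_{H^{-1}}\le \sup_s\norm{v(t^n)-v(s)}_{L^2}+b\sup_s\norm{\nabla h(t^n)-\nabla h(s)}_{L^2}.
\]
Squaring with $(x+y)^2\le 2x^2+2y^2$ gives \eqref{eq:Rnu-max-bound} with $C=2\max\{1,b^2\}$. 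Multiplying by $\tau$ and summing over $n$, the intervals tile $(0,T)$, which gives \eqref{eq:Rv-bound} and \eqref{eq:Rnu-bound}.

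\textbf{Main obstacle.} The only delicate point is Step~2: justifying the time integration of $\partial_t\nu(h)$ in $H^{-1}(\Omega)$ and evaluating it at the nodes. This uses the $W^{1,1}(0,T;H^{-1}(\Omega))$ regularity together with the identification of the continuous representatives noted in the statement. After that, the Bochner fundamental theorem of calculus applies directly, and everything else is elementary.
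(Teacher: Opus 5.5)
Your proposal is correct and follows the paper's proof essentially step by step. You integrate each equation over $(t^{n-1},t^n)$, add and subtract the endpoint values $\nabla h(t^n)$ and $v(t^n)$, and bound the residuals via $\nabla h(t^n)-\nabla h(s)=\int_s^{t^n}\nabla h_t(r)\,dr$ (and the analogue for $v$) together with Cauchy--Schwarz in time. Your explicit constants, $C=1$ in \eqref{eq:Rv-bound} and $C=2\max\{1,b^2\}$ for the $R_\nu^n$ bounds, are consistent with the paper's unspecified constant depending only on $b$.
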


\begin{proof}
Integrating the first equation of \eqref{eq:cutoff-westervelt-forced} over
$(t^{n-1},t^n)$ gives, in $L^2(\Omega)^d$,
\[
\frac{v(t^n)-v(t^{n-1})}{\tau}
+
\frac1\tau\int_{t^{n-1}}^{t^n}\nabla h(s)\,\ds
=
f_v^n.
\]
Adding and subtracting $\nabla h(t^n)$ yields \eqref{eq:exact-step-v}.

Similarly, using $\nu(h)\in W^{1,1}(0,T;H^{-1}(\Omega))$ and integrating the second equation over
$(t^{n-1},t^n)$ yields, for every $\xi\in H_0^1(\Omega)$,
\[
\frac1\tau\dual{\nu(h(t^n))-\nu(h(t^{n-1}))}{\xi}
-
\frac1\tau\int_{t^{n-1}}^{t^n}\ip{v(s)}{\nabla\xi}\,\ds
+
\frac b\tau\int_{t^{n-1}}^{t^n}\ip{\nabla h(s)}{\nabla\xi}\,\ds
=
\dual{f_h^n}{\xi}.
\]
Adding and subtracting the endpoint terms $\ip{v(t^n)}{\nabla\xi}$ and
$b\ip{\nabla h(t^n)}{\nabla\xi}$ gives
\[
\begin{aligned}
&\frac1\tau\dual{\nu(h(t^n))-\nu(h(t^{n-1}))}{\xi}
-
\ip{v(t^n)}{\nabla\xi}
+
b\ip{\nabla h(t^n)}{\nabla\xi}
-
\dual{f_h^n}{\xi}
\\
&\qquad=
-\frac1\tau\int_{t^{n-1}}^{t^n}\ip{v(t^n)-v(s)}{\nabla\xi}\,\ds
+
\frac b\tau\int_{t^{n-1}}^{t^n}\ip{\nabla h(t^n)-\nabla h(s)}{\nabla\xi}\,\ds
=
\dual{R_\nu^n}{\xi}.
\end{aligned}
\]
This proves \eqref{eq:exact-step-h}.

It remains to estimate the residuals. Since $h\in H^1(0,T;H_0^1(\Omega))$, for a.e.\ $s\in(t^{n-1},t^n)$,
\[
\nabla h(t^n)-\nabla h(s)
=
\int_s^{t^n}\nabla h_t(r)\,dr
\quad\text{in }L^2(\Omega)^d.
\]
Therefore
\[
\norm{R_v^n}_{L^2(\Omega)}
\le
\frac1\tau\int_{t^{n-1}}^{t^n}
\int_s^{t^n}\norm{\nabla h_t(r)}_{L^2(\Omega)}\,dr\,ds
\le
C\tau^{1/2}
\left(
\int_{t^{n-1}}^{t^n}\norm{h_t(r)}_{H_0^1(\Omega)}^2\,dr
\right)^{1/2}.
\]
After squaring, multiplying by $\tau$, and summing over $n$, this gives
\eqref{eq:Rv-bound}.

The same argument, together with
\[
v(t^n)-v(s)=\int_s^{t^n}v_t(r)\,dr
\quad\text{in }L^2(\Omega)^d,
\]
gives, for every $\xi\in H_0^1(\Omega)$,
\[
\abs{\dual{R_\nu^n}{\xi}}
\le
C\tau^{1/2}
\left(
\int_{t^{n-1}}^{t^n}
\Bigl[
\norm{v_t(r)}_{L^2(\Omega)^d}^2
+
\norm{h_t(r)}_{H_0^1(\Omega)}^2
\Bigr]\,dr
\right)^{1/2}
\norm{\xi}_{H_0^1(\Omega)}.
\]
Taking the supremum over $\xi\in H_0^1(\Omega)$ with
$\norm{\xi}_{H_0^1(\Omega)}\le1$ proves \eqref{eq:Rnu-max-bound}. Multiplying by $\tau$ and
summing over $n$ proves \eqref{eq:Rnu-bound}.
\end{proof}

\begin{remark}[On the scope of the temporal consistency estimate]
We emphasize that the regularity assumed in Lemma~\ref{lem:consistency-residuals}, namely
$v\in H^1(0,T;L^2(\Omega)^d)$, $h\in H^1(0,T;H_0^1(\Omega))$, and
$\nu(h)\in W^{1,1}(0,T;H^{-1}(\Omega))$, is strictly stronger than the regularity provided by
Theorems~\ref{thm:existence-cutoff-H1} and~\ref{thm:existence-cutoff}. The existence of such
regular solutions is not established here and is assumed only for the purpose of the consistency
analysis.
Lemma~\ref{lem:consistency-residuals} shows that, for regular exact solutions, the residual
sequences obtained by inserting the exact solution into the Rothe step are of order $O(\tau)$ in
the discrete-in-time norms
\[
\left(\tau\sum_{n=1}^N\|\cdot\|_{L^2(\Omega)^d}^2\right)^{1/2}
\quad\text{and}\quad
\left(\tau\sum_{n=1}^N\|\cdot\|_{H^{-1}(\Omega)}^2\right)^{1/2}.
\]
These consistency bounds explain why first-order temporal behavior is plausible in stable regimes
and provide the quantitative basis for the numerical experiments below. They do not, by themselves,
constitute a convergence-rate theorem: such a theorem would additionally require a stability
estimate for the fully nonlinear coupled error system.
\end{remark}

\begin{remark}[Why no energy-norm error theorem is claimed]\label{rem:no-error-theorem}
The consistency bounds of Lemma~\ref{lem:consistency-residuals} give an $O(\tau)$ estimate for the
residual sequences in the natural discrete $L^2$-in-time norms. However, turning these bounds into
a first-order energy-norm error estimate for the coupled system requires a stability estimate which
is not a direct consequence of the monotonicity argument used above.

Indeed, let
\[
e_v^n:=v(t^n)-v^n,\qquad
e_h^n:=h(t^n)-h^n,\qquad
e_\nu^n:=\nu(h(t^n))-\nu(h^n).
\]
Testing the error equations by $e_v^n$ and $e_h^n$ gives the formal identity
\begin{align*}
&\frac{1}{2\tau}
\left(
\|e_v^n\|_{L^2(\Omega)}^2
\!-\!\|e_v^{n-1}\|_{L^2(\Omega)}^2
\!+\!\|e_v^n\!-\!e_v^{n-1}\|_{L^2(\Omega)}^2
\right)
\!+\!
\frac{1}{\tau}\ip{e_\nu^n-e_\nu^{n-1}}{e_h^n}
\!+\!
b\|\nabla e_h^n\|_{L^2(\Omega)}^2
=
\ip{R_v^n}{e_v^n}
\!+\!
\dual{R_\nu^n}{e_h^n}.
\end{align*}
The monotonicity and Lipschitz continuity of $\nu$ imply
\[
2\ip{e_\nu^n-e_\nu^{n-1}}{e_h^n}
\ge
\delta\|e_h^n\|_{L^2(\Omega)}^2
-
\frac{\Lambda^2}{\delta}\|e_h^{n-1}\|_{L^2(\Omega)}^2.
\]
After Poincaré's inequality this gives a previous-time contribution of the form
\[
-\frac{\Lambda^2 C_P^2}{\delta}\|\nabla e_h^{n-1}\|_{L^2(\Omega)}^2.
\]
Crucially, this term is not multiplied by $\tau$, whereas the parabolic dissipation available
after multiplication of the displayed error identity by $2\tau$ is
$2b\tau\|\nabla e_h^n\|_{L^2(\Omega)}^2$. Thus a direct discrete Gronwall argument in the
natural energy norm does not close uniformly as $\tau\to0$.
\end{remark}

\section{Numerical experiments}\label{sec:numerics}

We conclude by reporting numerical experiments for the cutoff Westervelt system and for the Rothe
time discretization introduced above. The goals of this section are threefold:
\begin{enumerate}[label=(\roman*)]
\item to investigate numerically the temporal convergence behavior suggested by the consistency
analysis of Section~\ref{sec:error-estimate},
\item to illustrate the a posteriori cutoff-removal mechanism in the small-data regime,
\item to compare the enthalpy-based Rothe scheme with a coefficient-frozen variant.
\end{enumerate}
This section highlights the specific advantages of the present Rothe
formulation: the monotone scalar structure of the nonlinear solve, the natural energy estimate,
and the a posteriori cutoff-inactivity mechanism.

\subsection{Fully discrete scheme}

For the spatial discretization we employ continuous piecewise affine finite elements on a
shape-regular triangulation $\mathcal T_h$ of $\Omega$, and denote by $V_h\subset H_0^1(\Omega)$
the corresponding conforming finite element space. For the velocity variable we use the
finite-dimensional space
\[
Q_h:=\{\nabla \xi_h:\xi_h\in V_h\}\subset L^2(\Omega)^d
\]
\begin{revision}
This is generally a proper gradient subspace of the elementwise constant vector fields. In one
space dimension with homogeneous endpoint values, it consists of the elementwise constants with
zero integral over $(0,1)$; it is not the full DG0 space.
\end{revision}
\rev{We denote} by $P_{Q_h}:L^2(\Omega)^d\to Q_h$ the $L^2$-orthogonal projection. The initial values
are chosen as $v_h^0=P_{Q_h}v_0$ and as a prescribed conforming approximation
$h_h^0\in V_h$ of $h_0$. At every time step, given
$(v_h^{n-1},h_h^{n-1})\in Q_h\times V_h$, we compute $h_h^n\in V_h$ from
\begin{equation}\label{eq:fem-rothe-h}
\frac{1}{\tau} \ip{\nu(h_h^n)-\nu(h_h^{n-1})}{\xi_h}
+
(b+\tau)\ip{\nabla h_h^n}{\nabla\xi_h}
=
\ip{v_h^{n-1}}{\nabla\xi_h}
+
\dual{f_h^n}{\xi_h}
+
\tau \ip{P_{Q_h}f_v^n}{\nabla\xi_h}
\end{equation}
for all $\xi_h\in V_h$, and then update
\begin{equation}\label{eq:fem-rothe-v}
v_h^n
=
v_h^{n-1}
-
\tau\nabla h_h^n
+
\tau P_{Q_h}f_v^n.
\end{equation}
Then $v_h^n\in Q_h$ for all $n$. In the unforced experiments, $P_{Q_h}f_v^n=0$.
For manufactured-solution tests with nonzero $f_v$, the projection is included in the assembly.
In the implementation, the nonlinear scalar problem \eqref{eq:fem-rothe-h} is solved by a damped
Newton iteration. \rev{The lower bound $\nu'(r)\ge\delta>0$ makes every Newton Jacobian coercive and
nonsingular, but it does not by itself imply global convergence of Newton's method. With the stated
line search and initialization, the nonlinear solves were observed to converge robustly in the
experiments below.}

All experiments are performed on the interval $\Omega=(0,1)$ with continuous piecewise affine
finite elements. The nonlinear scalar problem is solved by damped Newton iteration. The parameters used are listed in Table~\ref{tab:parameters}.

\begin{table}[htp!]
\centering
\begin{tabular}{l l}
\hline
\multicolumn{2}{l}{\emph{Common to all experiments}}\\
\hline
Spatial domain & $\Omega=(0,1)$\\
Enthalpy space $V_h$ & continuous $\mathbb P_1$ (CG1)\\
Velocity space $Q_h=\nabla V_h$ & \rev{gradient subspace of elementwise $\mathbb P_0$}\\
Nonlinear coefficient $a$ & $0.25$\\
Diffusion coefficient $b$ & $1.0$\\
Cutoff level $M$ & $1.5$ \quad($M<1/a=4$)\\
Mobility bounds & $\delta=1-aM=0.625$, \ $\Lambda=1+aM=1.375$\\
Nonlinear solver & damped Newton (SNES \texttt{newtonls}, \texttt{bt} line search)\\
Newton tolerances & $\texttt{rtol}=10^{-10}$, $\texttt{atol}=10^{-12}$, max.\ $50$ iter.\\
Linear solver & direct (LU)\\
\hline
\multicolumn{2}{l}{\emph{Exp.\ 1: temporal self-convergence (unforced)}}\\
\hline
Final time $T$ & $0.1$\\
Initial amplitude $\alpha$ & $0.05$ \quad($h_0=\alpha\sin\pi x$, $v_0=0$)\\
Fixed mesh & $n=2000$ cells\\
Time steps $\tau=T/N$ & $N\in\{16,32,64,128,256,512\}$\\
Reference step & $N_{\mathrm{ref}}=8192$\\
\hline
\multicolumn{2}{l}{\emph{Exp.\ 2: cutoff inactivity (unforced)}}\\
\hline
Final time $T$ & $0.2$\\
Mesh, time step & $n=800$, \ $\tau=10^{-3}$\\
Initial amplitudes $\alpha$ & $\{0.02,\,0.05,\,0.10,\,0.20,\,0.40\}$\\
\hline
\multicolumn{2}{l}{\emph{Exp.\ 3: enthalpy vs.\ coefficient-frozen (unforced)}}\\
\hline
Final time $T$ & $0.2$\\
Mesh, time step & $n=800$, \ $\tau=10^{-3}$\\
Initial amplitudes $\alpha$ & $\{0.40,\,0.80,\,1.00,\,1.20\}$\\
\hline
\end{tabular}
\caption{Discretization and solver parameters used in the numerical experiments.}
\label{tab:parameters}
\end{table}

When forcing terms are used, they are assembled from the prescribed exact solution by exact
differentiation and projection onto the discrete spaces. For error measurements against either an
exact or a reference solution, we report the accumulated fixed-mesh error indicator
\[
\left(
\max_{1\le m\le N}\|e_v^m\|_{L^2(\Omega)}^2
+
\tau\sum_{n=1}^N \|e_h^n\|_{L^2(\Omega)}^2
+
\tau\sum_{n=1}^N \|\nabla e_h^n\|_{L^2(\Omega)}^2
\right)^{1/2}.
\]
This quantity is motivated by the discrete energy structure of the scheme, but it should not be
interpreted as a proved energy-norm error bound for the fully nonlinear problem.

\subsection{Experiment 1: temporal self-convergence}

We investigate the temporal convergence behavior of the scheme by a self-convergence
(Cauchy-in-$\tau$) study on a fixed spatial mesh. Since Section~\ref{sec:error-estimate}
establishes temporal \emph{consistency} rather than a full energy-norm error estimate,
a self-convergence study is the natural quantitative complement. All runs use the same spatial
discretization, so the comparison primarily reflects the dependence on the time step within the
fixed finite-dimensional spatial problem. We solve the
unforced cutoff system on $\Omega=(0,1)$ with initial data $h_0=\alpha\sin(\pi x)$,
$v_0=0$, and amplitude $\alpha=0.05$, so that the cutoff is inactive throughout. On a
fixed mesh of $n=2000$ cells we compute a reference solution with $N_{\mathrm{ref}}=8192$
time steps, and for $\tau=T/N$ with $N\in\{16,32,64,128,256,512\}$ we measure, at the
common time nodes,
\[
\max_n \norm{h_h^n-h_{\mathrm{ref}}(t^n)}_{L^2(\Omega)},
\qquad
\max_n \norm{v_h^n-v_{\mathrm{ref}}(t^n)}_{L^2(\Omega)},
\]
together with the natural accumulated energy quantity defined above. Because the spatial
discretization is identical for every $\tau$, the reported rates should be understood as fixed-mesh
temporal self-convergence rates rather than as continuum error rates. The
velocity variable is represented in the discrete gradient space $Q_h=\nabla V_h$, so that
the update $v_h^n=v_h^{n-1}-\tau\nabla h_h^n$ is exact and the discrete energy identity
underlying Lemma~\ref{lem:discrete-energy} holds at the fully discrete level.

Figure~\ref{fig:time-convergence} shows the errors as a function
of $\tau$, and Table~\ref{tab:eoc} reports the experimental orders of convergence.
\begin{revision}
All three quantities display approximately first-order decay; the reported EOC values range from
$0.98$ to $1.05$. The slight values above one at the finest displayed levels may be influenced by
the finite-step reference solution, but no separate reference-resolution study is reported here.
The observed behavior is consistent with the $O(\tau)$ temporal consistency of
Section~\ref{sec:error-estimate}, without constituting a continuum error-rate theorem.
\end{revision}

\begin{figure}[htp!]
\centering
\includegraphics[width=.55\textwidth,page=1]{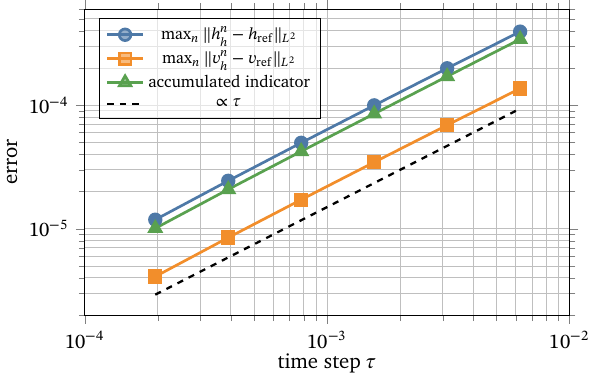}
\caption{Temporal self-convergence on a fixed mesh ($n=2000$, $N_{\mathrm{ref}}=8192$).
The enthalpy error, velocity error, and accumulated fixed-mesh indicator all decrease at first
order in $\tau$, consistent with the $O(\tau)$ temporal consistency of
Section~\ref{sec:error-estimate}. The dashed line indicates the slope-$1$ reference.}
\label{fig:time-convergence}
\end{figure}

\begin{table}[htp!]
\centering
\begin{tabular}{c c c c c c c}
\hline
$\tau$
& $\max_n \|h_h^n-h_{\mathrm{ref}}\|_{L^2}$ & EOC
& $\max_n \|v_h^n-v_{\mathrm{ref}}\|_{L^2}$ & EOC
& accumulated indicator & EOC \\
\hline
$6.250\cdot 10^{-3}$ & $3.939\cdot 10^{-4}$ & --   & $1.364\cdot 10^{-4}$ & --   & $3.419\cdot 10^{-4}$ & --   \\
$3.125\cdot 10^{-3}$ & $1.989\cdot 10^{-4}$ & 0.99 & $6.893\cdot 10^{-5}$ & 0.98 & $1.710\cdot 10^{-4}$ & 1.00 \\
$1.563\cdot 10^{-3}$ & $9.968\cdot 10^{-5}$ & 1.00 & $3.455\cdot 10^{-5}$ & 1.00 & $8.524\cdot 10^{-5}$ & 1.00 \\
$7.813\cdot 10^{-4}$ & $4.960\cdot 10^{-5}$ & 1.01 & $1.719\cdot 10^{-5}$ & 1.01 & $4.231\cdot 10^{-5}$ & 1.01 \\
$3.906\cdot 10^{-4}$ & $2.444\cdot 10^{-5}$ & 1.02 & $8.473\cdot 10^{-6}$ & 1.02 & $2.083\cdot 10^{-5}$ & 1.02 \\
$1.953\cdot 10^{-4}$ & $1.184\cdot 10^{-5}$ & 1.05 & $4.103\cdot 10^{-6}$ & 1.05 & $1.008\cdot 10^{-5}$ & 1.05 \\
\hline
\end{tabular}
\caption{Temporal self-convergence on a fixed mesh ($n=2000$, $N_{\mathrm{ref}}=8192$).
Errors are measured against a fine-step reference at common time nodes using the same spatial
discretization for every time step. The reported orders are therefore fixed-mesh temporal
self-convergence rates, not continuum error rates. All three quantities converge at first order,
consistent with the $O(\tau)$ temporal consistency of Section~\ref{sec:error-estimate}.}
\label{tab:eoc}
\end{table}

\subsection{Experiment 2: cutoff inactivity in the small-data regime}

We next illustrate the a posteriori cutoff-removal principle. We consider the unforced problem on
$\Omega=(0,1)$ with initial data
\[
h_0(x)=\alpha \sin(\pi x),
\qquad
v_0(x)=0,
\]
for several amplitudes $\alpha>0$.
For each run we monitor the quantity
\(
t^n \mapsto \norm{h_h^n}_{L^\infty(\Omega)}.
\)
In particular, if it holds
\(
\max_{0\le n\le N}\norm{h_h^n}_{L^\infty(\Omega)} < M,
\)
where $M$ denotes the cutoff threshold, then the cutoff remains inactive throughout the simulation.
\begin{revision}
At the fully discrete level, the computed equations then coincide with the corresponding uncut
first-order discretization at every time step. For the present data, $v_0=0=-\nabla\psi_0$ with
$\psi_0=0$, so the potential compatibility required for the scalar interpretation is also satisfied.
This observation is numerical evidence only; by itself it does not verify the uniform continuum
criterion of Proposition~\ref{prop:cutoff-removal-H2} or prove that every limiting weak solution is
uncut.
\end{revision}

The left panel of Figure~\ref{fig:cutoff-panels} shows the maximal discrete amplitude as a function
of the initial amplitude together with the cutoff threshold $M$. In all tested cases the numerical
solution remains strictly below that threshold. The right panel of Figure~\ref{fig:cutoff-panels}
displays the corresponding time traces. In the present one-dimensional tests, the maximal amplitude
essentially stays at the level imposed by the initial data and remains well separated from the
threshold throughout the computation. Thus the numerical results are \rev{consistent with discrete
cutoff inactivity in the tested runs, while the continuum cutoff-removal statements remain conditional
on the uniform bounds stated in Section~\ref{sec:cutoff-removal}}.

\begin{figure}[htp!]
\centering
\includegraphics[width=.4\textwidth,page=2]{figures.pdf}
\includegraphics[width=.4\textwidth,page=3]{figures.pdf}
\caption{Left: Maximum discrete amplitude as a function of the initial amplitude $\alpha$.
For all tested amplitudes, the numerical solution remains below the cutoff threshold $M$.
Right: Temporal evolution of the discrete $L^\infty$ amplitude in the cutoff-inactivity tests.
In all cases the solution remains well below the cutoff threshold throughout the computation.}
\label{fig:cutoff-panels}
\end{figure}

\subsection{Experiment 3: comparison with a coefficient-frozen scheme}

Finally, we compare the present enthalpy-based scheme with the more explicit coefficient-frozen
Rothe step
\begin{equation}\label{eq:scheme-frozen}
\frac{1}{\tau}\rev{\ip{m(h_h^{n-1})(h_h^n-h_h^{n-1})}{\xi_h}}
-
\ip{v_h^n}{\nabla\xi_h}
+
b\ip{\nabla h_h^n}{\nabla\xi_h}
=
0,
\qquad
v_h^n=v_h^{n-1}-\tau\nabla h_h^n.
\end{equation}
\begin{revision}
We consider initial data with amplitudes closer to the cutoff threshold at the single time step
$\tau=10^{-3}$ listed in Table~\ref{tab:parameters}. We compare the methods through the solver
iteration diagnostic, the maximal amplitude
$\max_n\|h_h^n\|_{L^\infty(\Omega)}$, and the common diagnostic energy
\begin{equation}\label{eq:diagnostic-energy-frozen}
\mathcal E_{\mathrm{diag}}^n
:=\int_\Omega E(h_h^n)\,dx+\frac12\|v_h^n\|_{L^2(\Omega)}^2.
\end{equation}
For the enthalpy scheme, Lemma~\ref{lem:discrete-energy} supplies a rigorous dissipation inequality.
For the coefficient-frozen scheme, \eqref{eq:diagnostic-energy-frozen} is plotted only as a common
numerical diagnostic; the remark following Definition~\ref{def:weak-solution} shows that no such
energy law holds in general. Since no time-step sweep is performed in this experiment, no conclusion
about robustness with respect to $\tau$ is drawn.
\end{revision}

\begin{revision}
Figure~\ref{fig:scheme-comparison} summarizes the comparison over several amplitudes. The
coefficient-frozen problem is linear at each time step, whereas the enthalpy scheme requires the
reported damped Newton iterations. The observed Newton counts remain modest. Because wall-clock
times and total linear-solve counts are not reported, no quantitative claim about overall computational
cost is made.

At the displayed resolution, the two methods have closely similar amplitude behavior in these tests,
and the maximal discrete amplitude stays below the cutoff threshold in every case shown.
Figure~\ref{fig:scheme-comparison-trace} gives the time-dependent comparison for $\alpha=1.2$.
The amplitude traces nearly coincide at plotting resolution, and the diagnostic energies
\eqref{eq:diagnostic-energy-frozen} have similar observed qualitative behavior. This last observation
is empirical for the fixed parameters and time step; it is not an energy-stability theorem for the
coefficient-frozen method.
\end{revision}

\begin{figure}[htp!]
\centering
\includegraphics[width=.8\textwidth,page=4]{figures.pdf}
\caption{Comparison of the enthalpy-based Rothe scheme and the coefficient-frozen scheme.
\rev{Left: solver-iteration diagnostic; the frozen problem is linear, whereas the enthalpy problem is
solved by damped Newton iteration. Right: maximal discrete amplitude
$\max_n \|h_h^n\|_{L^\infty(\Omega)}$. In the present fixed-$\tau$ test set, both schemes remain
below the cutoff threshold.}}
\label{fig:scheme-comparison}
\end{figure}

\begin{figure}[htp!]
\centering
\includegraphics[width=.8\textwidth,page=5]{figures.pdf}
\caption{Temporal comparison of the enthalpy-based and coefficient-frozen schemes for the
representative amplitude $\alpha=1.2$. \rev{Left: the diagnostic energy
$\mathcal E_{\mathrm{diag}}^n$ from \eqref{eq:diagnostic-energy-frozen}. Right: discrete maximum
amplitude. The two traces show similar observed qualitative behavior for this fixed test; no general
energy inequality is asserted for the frozen scheme.}}
\label{fig:scheme-comparison-trace}
\end{figure}

\bibliographystyle{ieeetr}
\bibliography{literature}

\end{document}